\DeclareMathOperator{\D}{\operatorname{d}}
\newtheorem{theorem}{Theorem}[section]
\newtheorem{corollary}[theorem]{Corollary}
\newtheorem{proposition}[theorem]{Proposition}
\newtheorem{remark}[theorem]{Remark}
\def\irr#1{{\rm Irr}(#1)}
\def\irrr#1#2 {\irr {#1 \mid #2}}
\newcommand{\R}{\mathbb R}
\newcommand{\N}{\mathbb N}
\newcommand{\sfe}{{{\mathbb S}^{n-1}}}
\newcommand{\dist}{\mbox{\rm dist}}
\renewcommand{\d}{\mbox{\rm d}}
\newcommand{\rank}{\mbox{rank}}
\newcommand{\cl}{\mbox{\rm cl}}
\newcommand{\trace}{\mbox{tr}}
\newcommand{\xt}{\overline{x}}
\begin{document}

\title[]{\sc The Brunn-Minkowski inequality for the first eigenvalue\\
of the Ornstein-Uhlenbeck operator\\
and log-concavity of the relevant eigenfunction}
\author[Andrea Colesanti, Elisa Francini, Galyna Livshyts, Paolo Salani]{Andrea Colesanti, Elisa Francini, Galyna Livshyts, Paolo Salani}
\date{\today}
\begin{abstract} We prove that the first (nontrivial) Dirichlet eigenvalue of the Ornstein-Uhlenbeck operator
$$
L(u)=\Delta u-\langle\nabla u,x\rangle\,,
$$
as a function of the domain, is convex with respect to the Minkowski addition, and we characterize the equality cases in some classes of convex sets. We also prove that the corresponding (positive) eigenfunction is log-concave if the domain is convex.
\end{abstract} 
\maketitle

\section{Introduction}
In this paper we are concerned with the first Dirichlet eigenvalue $\lambda_\gamma$ of the Ornstein-Uhlenbeck operator, and the corresponding eigen\-function. Such eigenvalue represents the counterpart, in Gauss space, of the classical principal frequency, which can also be characterized as the optimal constant in the Poincar\'e inequality, or as the first Dirichlet eigenvalue of the Laplace operator.

We investigate in particular two aspects: a Brunn-Minkowski type inequality for $\lambda_\gamma$ with respect to the Minkowski addition, and the log-concavity of the corresponding (positive) eigen\-function. As we will see, the situation in the Gauss space closely parallels that of the classical setting, with some differences due to the lack of homogeneity and translation invariance of the Gauss measure.

\subsection{Short history in the classical Lebesgue case} For a bounded set $\Omega$ (with sufficiently regular boundary) in $\R^n$, the principal frequency of $\Omega$ is defined as follows:
$$
\lambda(\Omega):=\inf\left\{\frac{\displaystyle{\int_\Omega|\nabla v|^2 dx}}{\displaystyle{\int_\Omega v^2 dx}}\colon v\in W^{1,2}_0(\Omega),v\not\equiv0\right\}
$$   
(here and below we adopt the standard notation for Sobolev spaces). It is well-known, and follows via a variational argument, that $\lambda(\Omega)$ is the smallest positive number $\lambda$ such that there exists a solution of the boundary value problem:
\begin{equation}\label{EDP-lambda-Leb}
\left\{
\begin{array}{lll}
\Delta u=-\lambda u,\quad u>0,\quad\mbox{in $\Omega$,}\\
\\
u=0\quad\mbox{on $\partial\Omega$.}
\end{array}
\right.
\end{equation}
See e.g. \cite{Gilbarg-Trudinger,PolSz} for a classical exposition of this topic. 

The principal frequency $\lambda(\Omega)$ is monotone decreasing with respect to inclusion, that is, $\lambda(\Omega_1)\geq \lambda(\Omega_2)$ whenever $\Omega_1\subset \Omega_2$. Moreover $\lambda$ is a positively homogeneous functional of order $-2$:
$$
\lambda(t\Omega)=t^{-2}\lambda(\Omega)
$$
for any $t>0$. 

The principal frequency verifies various inequalities of isoperimetric type. The most significant is the Faber-Krahn inequality (see e.g. \cite{PolSz}) which states that
$$
\lambda(\Omega)\geq \lambda(\Omega'), 
$$
where $\Omega'$ is a ball having the same Lebesgue measure as $\Omega$ (an interesting strengthening of the Faber-Krahn inequality was proven by Kohler-Jobin \cite{KJ1978}, \cite{KJ1982}, see also Brasco \cite{Brasco}). 

\medskip

A different inequality, proven by Brascamp and Lieb \cite{Brascamp-Lieb-1976}, states that $\lambda^{-1/2}$ is concave with respect to the Minkowski addition of sets. We recall that for $A, B\subset \R^n,$ their Minkowski additon is defined as 
$$
A+B=\{x+y:\,x\in A,\, y\in B\}.
$$

\begin{theorem}[{\bf Brascamp-Lieb}]\label{BL thm} Let $\Omega_0,\Omega_1$ be connected, open and bounded subsets of $\R^n$, and let $t\in[0,1]$. Then 
\begin{equation}\label{concavity-lambda}
\lambda((1-t)\Omega_0+t\Omega_1)^{-\frac{1}{2}}\geq (1-t)\lambda(\Omega_0)^{-\frac{1}{2}}+ t\lambda(\Omega_1)^{-\frac{1}{2}}.
\end{equation}
\end{theorem}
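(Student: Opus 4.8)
The plan is to deduce \eqref{concavity-lambda} from its ``normalized'' special case (both eigenvalues equal to $1$) and then to prove that case by producing an explicit admissible function in the Rayleigh quotient of the Minkowski combination. Write $\Omega_t:=(1-t)\Omega_0+t\Omega_1$. First, by an inner exhaustion of $\Omega_0,\Omega_1$ by smooth domains, together with the continuity of $\lambda$ along increasing exhaustions and the fact that $(1-t)\Omega_0+t\Omega_1$ is the increasing union of the corresponding Minkowski combinations, I may assume $\partial\Omega_0,\partial\Omega_1$ smooth, so that the first eigenfunctions are classical solutions up to the boundary. Set $c_i:=\lambda(\Omega_i)^{-1/2}$. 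By the homogeneity $\lambda(r\Omega)=r^{-2}\lambda(\Omega)$, the dilated domains $\Omega_i':=c_i^{-1}\Omega_i$ satisfy $\lambda(\Omega_i')=1$, and if $u_i>0$ is the first eigenfunction of $\Omega_i$ then $\xi\mapsto u_i(c_i\xi)$ solves $\Delta\varphi+\varphi=0$ in $\Omega_i'$ and vanishes on $\partial\Omega_i'$. Since
$$
\Omega_t=(1-t)c_0\,\Omega_0'+tc_1\,\Omega_1'=m\bigl(s\,\Omega_0'+(1-s)\,\Omega_1'\bigr),\qquad m:=(1-t)c_0+tc_1,\quad s:=\tfrac{(1-t)c_0}{m}\in(0,1),
$$
homogeneity gives $\lambda(\Omega_t)=m^{-2}\,\lambda\bigl(s\,\Omega_0'+(1-s)\,\Omega_1'\bigr)$; as $m=(1-t)\lambda(\Omega_0)^{-1/2}+t\lambda(\Omega_1)^{-1/2}$, inequality \eqref{concavity-lambda} is equivalent to the claim: $\lambda\bigl(sA+(1-s)B\bigr)\le 1$ whenever $\lambda(A)=\lambda(B)=1$ and $s\in(0,1)$ (the endpoints being trivial).

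To prove this, let $u,v>0$ be the first eigenfunctions of $A,B$, so that $\Delta u+u=0$ in $A$ and $\Delta v+v=0$ in $B$, and put $C:=sA+(1-s)B$. Define on $C$
$$
w(\zeta):=\sup\bigl\{\,s\,u(\xi)+(1-s)\,v(\eta)\ :\ \xi\in\overline A,\ \eta\in\overline B,\ \zeta=s\xi+(1-s)\eta\,\bigr\}.
$$
Since $A,B$ are open, every $\zeta\in C$ has such a representation with $\xi\in A,\eta\in B$, so $w>0$ on $C$; moreover $w\ge 0$ and $w$ is bounded and upper semicontinuous on $\overline C$ (the correspondence $\zeta\mapsto\{(\xi,\eta)\in\overline A\times\overline B:s\xi+(1-s)\eta=\zeta\}$ is compact-valued and upper semicontinuous). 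The decisive point is that $w$ is a subsolution of $\Delta w+w=0$ in $C$. Indeed, fixing the difference $e:=\xi-\eta$, the representations of $\zeta$ with that difference are $\xi=\zeta+(1-s)e$, $\eta=\zeta-se$, so on the corresponding open set the ``slice''
$$
\varphi_e(\zeta):=s\,u\bigl(\zeta+(1-s)e\bigr)+(1-s)\,v\bigl(\zeta-se\bigr)
$$
is again a classical solution of $\Delta\varphi_e+\varphi_e=0$ — by translation invariance of $\Delta$ together with the linearity and homogeneity of the equation — and $w$ is, around each of its points, a supremum of such solutions. By standard arguments (approximation of the supremum by finite maxima, or viscosity-solution theory for the uniformly elliptic linear equation $\Delta w=-w$), such a supremum is a distributional subsolution of $\Delta w+w=0$ lying in $W^{1,2}_{\mathrm{loc}}(C)$.

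It then remains to read off the Rayleigh quotient. First, $w$ vanishes on $\partial C$: if $\zeta\in\partial C$ had a representation $\zeta=s\xi+(1-s)\eta$ with $\eta\in B$, then, taking a ball $B(\eta,r)\subseteq B$ and balls $B(\xi_k,\rho_k)\subseteq A$ with $\xi_k\to\xi\in\overline A$, the Minkowski combinations $B\bigl(s\xi_k+(1-s)\eta,\,s\rho_k+(1-s)r\bigr)$ are contained in $C$, have radius $\ge(1-s)r>0$, and have centres tending to $\zeta$, forcing $\zeta\in\operatorname{int}C$, a contradiction. Hence every representation of a point of $\partial C$ has $\xi\in\partial A$ and $\eta\in\partial B$, where $u$ and $v$ vanish, so $w=0$ on $\partial C$. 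Combined with upper semicontinuity, this shows that $\{w\ge\varepsilon\}$ is, for each $\varepsilon>0$, a compact subset of $C$; hence $\max(w-\varepsilon,0)\in W^{1,2}_0(C)$. Testing the subsolution inequality $\int_C\nabla w\cdot\nabla\psi\le\int_C w\,\psi$ (valid for $\psi\in W^{1,2}_0(C)$, $\psi\ge 0$) against $\psi=\max(w-\varepsilon,0)$ gives $\int_{\{w>\varepsilon\}}|\nabla w|^2\le\int_C w^2$, and letting $\varepsilon\to 0$ yields $\int_C|\nabla w|^2\le\int_C w^2$ and $w\in W^{1,2}_0(C)$. Thus $w$ is a nonnegative competitor with Rayleigh quotient $\le 1$, so $\lambda(C)\le 1$, which is \eqref{concavity-lambda}.

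The part I expect to be delicate is the assertion in the second paragraph that the envelope $w$ genuinely is a distributional subsolution in $W^{1,2}_{\mathrm{loc}}(C)$: the mechanism is transparent — each slice solves the same linear equation and a supremum of solutions is a subsolution — but a careful justification must handle the fact that the slices $\varphi_e$ are defined only on moving subdomains and that the defining supremum need not be attained at an interior representation; this is the typical envelope-regularity issue in Brunn--Minkowski arguments for PDE functionals, routine in spirit but technical. Reducing to smooth domains by approximation is the other, standard, technical point. (An alternative, and the route originally taken by Brascamp and Lieb in \cite{Brascamp-Lieb-1976}, bypasses the envelope altogether: running the Dirichlet heat semigroups $P^A_\tau,P^B_\tau,P^C_\tau$, a Pr\'ekopa--Leindler inequality for the heat kernels gives $P^C_\tau h\ge (P^A_\tau u)^{s}(P^B_\tau v)^{1-s}=e^{-\tau}\,u^{s}v^{1-s}$ at Minkowski-combined points, for a suitable sup-type $h$ supported in $C$, and comparison with the large-$\tau$ asymptotics $P^C_\tau h\sim \mathrm{const}\cdot e^{-\lambda(C)\tau}\,\phi_1$ forces $\lambda(C)\le 1$.)
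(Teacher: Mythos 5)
The paper does not actually prove this theorem: it is stated as known, with the weak form \eqref{concavity-lambda weak} attributed to \cite{Brascamp-Lieb-1976} and the strong form \eqref{concavity-lambda} said to follow by ``a standard argument based on the homogeneity of $\lambda$.'' Your homogeneity reduction to the normalized case $\lambda(A)=\lambda(B)=1$ is exactly that standard argument, and that part is correct. What you must then supply is a proof of $\lambda(sA+(1-s)B)\le 1$, and here your choice of envelope diverges from both the paper's technique (applied in Section~\ref{section viscosity} to its Gaussian analogue, Theorem~\ref{thm BM for lambda}) and the classical literature in a way that opens a genuine gap.

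The paper uses the \emph{geometric-mean} envelope $u_t(\bar x)=\max\{u_0(x_0)^{1-t}u_1(x_1)^t: \bar x=(1-t)x_0+tx_1\}$. Its crucial structural feature is that positivity of $u_t(\bar x)$ at an interior $\bar x$ forces \emph{both} $u_0(x_0)>0$ and $u_1(x_1)>0$ at the maximizing decomposition, hence $x_0,x_1$ are interior; the touching function is then $C^2$ in a full neighbourhood of $\bar x$, and the first-order condition $\nabla u_0/u_0=\nabla u_1/u_1$ cancels the cross terms. Your \emph{arithmetic} envelope $w(\zeta)=\sup\{s\,u(\xi)+(1-s)v(\eta)\}$ lacks this feature: the supremum can be positive yet attained with, say, $\xi\in\partial A$ and $\eta\in B$, since $u(\xi)=0$ is compensated by $v(\eta)>0$. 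The first-order necessary condition at such a boundary maximizer is merely $|\nabla u(\xi)|\le \langle\nabla v(\eta),\nu_{\rm in}(\xi)\rangle$, which there is no reason to exclude. At such a $\zeta$ the maximizing slice $\varphi_e$ is defined only on one side of $\zeta$, and the near-maximizing slices $\varphi_{e(\delta)}$ have domains that collapse to $\{\zeta\}$ as $\delta\to0$; so the ``supremum of solutions is a subsolution'' device (which requires a near-maximizer defined on a \emph{fixed} neighbourhood of the test point, cf.\ \cite{userguide}) does not apply. This is precisely the step you flagged as ``routine in spirit but technical''; it is in fact the structural reason the literature works with geometric-mean (or $p$-mean) envelopes rather than arithmetic ones, and as written the gap is real, not cosmetic. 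A direct fix is to replace $w$ by $\sup\{u(\xi)^{s}v(\eta)^{1-s}\}$ and run the paper's viscosity argument verbatim (this also yields \eqref{concavity-lambda weak} without prior normalization, making the reduction step optional); alternatively, develop the heat-semigroup / Pr\'ekopa--Leindler argument of \cite{Brascamp-Lieb-1976} that you sketch in your final parenthesis, which is the original proof and bypasses the envelope issue altogether.
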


We refer the reader to \cite{Borell-2000} for an alternative proof, and to \cite{Colesanti} for the characterization of equality conditions. In fact, in \cite{Brascamp-Lieb-1976} the following weaker inequality is proved:
\begin{equation}\label{concavity-lambda weak}
\lambda((1-t)\Omega_0+t\Omega_1)\leq (1-t)\lambda(\Omega_0)+ t\lambda(\Omega_1).
\end{equation}
On the other hand, \eqref{concavity-lambda} can be proved using \eqref{concavity-lambda weak} and a standard argument based on the homogeneity of $\lambda$. This aspect will mark a difference between the classical and the Gaussian case.  

\medskip

Inequality (\ref{concavity-lambda}) may be compared to the classical Brunn-Minkowski inequality: for every measu\-rable sets $A_0, A_1$ and $t\in[0,1]$, such that $(1-t)A_0+tA_1$ is measurable,
\begin{equation}\label{BrMi}
|(1-t)A_0+t A_1|^{\frac{1}{n}}\geq (1-t)|A_0|^{\frac{1}{n}}+t|A_1|^{\frac{1}{n}},
\end{equation}
where $|\cdot|$ denotes the Lebesgue measure. 
The analogy is strengthened by the fact that the exponent $1/n$ in \eqref{BrMi} is the reciprocal of the homogeneity order of the involved functional, like in \eqref{concavity-lambda}. We refer to \cite{Gardner,Schneider,AGMbook} for extensive treatises on the Brunn-Minkowski inequality.

\medskip

The concavity inequality (\ref{concavity-lambda}) is connected with a geometric property of the solution $u$ of \eqref{EDP-lambda-Leb}: if $\Omega$ is convex, then $u$ is log-concave. The log-concavity of $u$ was first proved in \cite{Brascamp-Lieb-1976}; different proofs can be found in \cite{Caffarelli-Friedman, Korevaar1,Korevaar-Lewis,Kennington} (see also \cite{Acker-Payne-Philippin} for a weaker version of this result). The link between the concavity of the variational functional $\lambda$ and the geometric property of the solution of the cor\-res\-ponding Euler-Lagrange equation is not evident,  but it is revealed by the proofs of the two facts. Indeed, more than one technique successfully exploited to prove the log-concavity of $u$ can be adapted to demonstrate the validity of \eqref{concavity-lambda} (see \cite{Brascamp-Lieb-1976, Korevaar1, Kennington}). This phenomenon can also be observed for other functionals, like the electrostatic capacity and the torsional rigidity (see for instance \cite{Colesanti}). As we will see in this paper, the eigenvalue of the Ornstein-Uhlenbeck operator provides a further example in this sense.

\subsection{The Gaussian eigenvalue problem}

We denote by $\gamma$ the Gaussian measure in $\R^n$, which is defined as follows: for a measurable subset $A$ of $\R^n$,
$$
\gamma(A)=\frac1{(2\pi)^{n/2}}\int_A e^{-|x|^2/2}\d x.
$$
In the sequel, $\d\gamma$ stands for integration with respect to $\gamma$.

Let $\Omega\subset\R^n$ be open and bounded and assume that $\partial\Omega$ is Lipschitz. The Gaussian version of the principal frequency for $\Omega$ can be defined as the solution of the following minimization problem:
\begin{equation}\label{lambdagamma}
\lambda_\gamma (\Omega)=\inf\left\{R_{\gamma}(v)\colon v\in W^{1,2}_0(\Omega,\gamma),v\not\equiv0\right\},
\end{equation}
where $R_{\gamma}$ is the Gaussian Rayleigh quotient, defined as
$$
R_{\gamma}(v)=\frac{\displaystyle{\int_\Omega|\nabla v|^2\d\gamma}}{\displaystyle{\int_\Omega v^2\d\gamma}}.
$$ 
A standard variational argument shows that the infimum in \eqref{lambdagamma} is in fact a minimum and $\lambda_\gamma(\Omega)$ coincides with the positive number $\lambda_\gamma$ such that the following boundary value problem has a solution:
\begin{equation}\label{EDP lambda}
\left\{
\begin{array}{lll}
Lu=-\lambda_\gamma u,\quad u>0,\quad\mbox{in $\Omega$,}\\
\\
u=0\quad\mbox{on $\partial\Omega$.}
\end{array}
\right.
\end{equation}
Here $L$ is the Ornstein-Uhlenbeck operator, namely, the second order differential operator defined on $C^2(\Omega)$ by the formula:
$$
Lu=\Delta u-\langle\nabla u,x\rangle.
$$
The solution of problem \eqref{EDP lambda} is unique, up to multiplication by positive constants (that is, $\lambda_\gamma$ is simple). 
Throughout the paper, we call {\em eigenfunction} of $\Omega$ any solution of \eqref{EDP lambda}. We mention that the definition of $\lambda_\gamma$ can be extended in a natural way to unbounded sets (see, for instance, \cite{Ehrhard-84,Liv}). 

We will refer to $\lambda_\gamma$ as to the first Dirichlet eigenvalue of the operator $L$ in $\Omega$, or as to the Gaussian principal frequency, and occasionally we indicate it as $\lambda_{\gamma}(\Omega)$ to emphasize its dependency on $\Omega$. We refer, e.g., to \cite{CarKer}, \cite{Liv}, \cite{HerLiv} for a general study of the Gaussian principal frequency in various contexts. Note that, similarly to the Lebesgue case, $\lambda_{\gamma}(\Omega_1)\geq \lambda_{\gamma}(\Omega_2)$ whenever $\Omega_1\subset\Omega_2$; however, the functional $\lambda_{\gamma}$ lacks homogeneity properties.

A Gaussian analogue of Faber-Krahn inequality was established by Ehrhard in \cite{Ehrhard-84} (see also \cite{CarKer} for a discussion on equality conditions). This result states for every open set $\Omega\subset\R^n$ 
$$
\lambda_{\gamma}(\Omega)\geq \lambda_{\gamma}(H)
$$
where $H$ is a half-space such that $\gamma(K)=\gamma(H)$ (for the Gaussian analogue of the Kohler-Jobin inequality, see \cite{HerLiv}). According to the Gaussian isoperimetric inequality (see \cite{Sudakov-Tsirelson-1974,Borell-1975}), half-spaces are the isoperimetric sets for the Gaussian measure; it is therefore not surprising that half-spaces also appear as minimizers for the Gaussian principal frequency (in fact, these kind of phenomena are directly related, see for instance \cite{CarKer}).

\medskip

In this paper we establish a convexity property for the Gaussian principal frequency $\lambda_\gamma$. In analogy with \eqref{concavity-lambda weak}, we show the following result. 

\begin{theorem}\label{thm BM for lambda} Let $\Omega_0, \Omega_1$ be open, bounded subsets of $\R^n$. Let $t\in[0,1]$ and set
$$
\Omega_t=(1-t)\Omega_0+t\Omega_1.
$$
Then 
\begin{equation}\label{BM for lambda}
\lambda_{\gamma}(\Omega_t)\le (1-t)\lambda_{\gamma}(\Omega_0)+t\lambda_{\gamma}(\Omega_1).
\end{equation}
\end{theorem}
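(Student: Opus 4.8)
The plan is to construct a competitor for the Rayleigh quotient of $\Omega_t$ by combining the eigenfunctions of $\Omega_0$ and $\Omega_1$ via a Minkowski-type interpolation. Let $u_0$ and $u_1$ be eigenfunctions for $\Omega_0$ and $\Omega_1$ respectively, extended by zero outside their domains. The natural ansatz, going back to the method of Brascamp and Lieb, is to define, for $x\in\Omega_t$,
\[
w(x)=\sup\Bigl\{u_0(x_0)^{1-t}\,u_1(x_1)^{t}\colon x=(1-t)x_0+tx_1,\ x_0\in\Omega_0,\ x_1\in\Omega_1\Bigr\}.
\]
This $w$ is supported in $\overline{\Omega_t}$, vanishes on the boundary, and belongs to $W^{1,2}_0(\Omega_t,\gamma)$ once suitable regularity/approximation is addressed; the goal is then to show $R_\gamma(w)\le (1-t)\lambda_\gamma(\Omega_0)+t\lambda_\gamma(\Omega_1)$, which by \eqref{lambdagamma} yields \eqref{BM for lambda}.

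The key computation splits into a bound on the Gaussian Dirichlet energy of $w$ from above and a bound on the Gaussian $L^2$ norm of $w$ from below, both in terms of the corresponding quantities for $u_0$ and $u_1$. For the numerator, at a point $x=(1-t)x_0+tx_1$ where the supremum is attained (and $w$ is differentiable, which holds a.e.), one has the pointwise inequality
\[
|\nabla w(x)|\le (1-t)\,\frac{|\nabla u_0(x_0)|}{u_0(x_0)}\,w(x)+t\,\frac{|\nabla u_1(x_1)|}{u_1(x_1)}\,w(x),
\]
so that, after squaring and using the convexity of $s\mapsto s^2$ together with the weight splitting $e^{-|x|^2/2}\ge e^{-(1-t)|x_0|^2/2}e^{-t|x_1|^2/2}$ (which is exactly the concavity of $|\cdot|^2$, i.e. $|(1-t)x_0+tx_1|^2\le (1-t)|x_0|^2+t|x_1|^2$), the integral $\int_{\Omega_t}|\nabla w|^2\,d\gamma$ is controlled by a product-type integral. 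The change of variables $(x_0,x_1)\mapsto((1-t)x_0+tx_1,\cdot)$ combined with the eigenvalue equations $-Lu_i=\lambda_\gamma(\Omega_i)u_i$, which after integration by parts give $\int_{\Omega_i}|\nabla u_i|^2\,d\gamma=\lambda_\gamma(\Omega_i)\int_{\Omega_i}u_i^2\,d\gamma$, should turn the weighted products into the desired convex combination. For the denominator, the Prékopa--Leindler inequality (the functional form of Brunn--Minkowski), applied to $u_0^2$, $u_1^2$ and $w^2$ with the Gaussian weight distributed as above, yields $\int_{\Omega_t}w^2\,d\gamma\ge \bigl(\int_{\Omega_0}u_0^2\,d\gamma\bigr)^{1-t}\bigl(\int_{\Omega_1}u_1^2\,d\gamma\bigr)^{t}$; normalizing $\int u_i^2\,d\gamma=1$ makes this a clean lower bound of $1$.

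Putting the two estimates together, $R_\gamma(w)=\int|\nabla w|^2\,d\gamma\big/\int w^2\,d\gamma$ is bounded by a quantity that, after the normalizations, reduces to $(1-t)\lambda_\gamma(\Omega_0)+t\lambda_\gamma(\Omega_1)$, using one more time the convexity of the square to pass from $a^{1-t}b^{t}$-type bounds to the linear combination, or more directly by carrying out the energy estimate so that the cross terms are handled by Cauchy--Schwarz. The main obstacle I anticipate is the rigorous justification of the differentiability and Sobolev regularity of the sup-convolution $w$ and the legitimacy of the change of variables: $w$ need not be smooth, the sup may be attained on a set rather than at a single point, and the boundary behavior near $\partial\Omega_t$ must be handled so that $w\in W^{1,2}_0(\Omega_t,\gamma)$. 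The standard remedies are to first prove the inequality for smooth, strictly convex $\Omega_0,\Omega_1$ where $u_i$ are smooth up to the boundary and the infimal-convolution structure is well understood, establish the pointwise gradient bound at points of differentiability (a.e.\ by Rademacher-type arguments or by the semiconcavity of $\log w$), and then pass to general open bounded sets by an approximation/monotonicity argument using that $\lambda_\gamma$ is monotone under inclusion and continuous under Hausdorff convergence of nice domains.
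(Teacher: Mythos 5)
Your candidate function $w$ and the Pr\'ekopa--Leindler lower bound on $\int w^2\,d\gamma$ match the paper's setup, but the numerator estimate is a genuine gap, and it is the one obstacle you did not flag. The weight splitting $e^{-|x|^2/2}\ge e^{-(1-t)|x_0|^2/2}e^{-t|x_1|^2/2}$ goes the wrong way for an upper bound on $\int_{\Omega_t}|\nabla w|^2\,d\gamma$: it makes the Gaussian weight \emph{larger} than the split product, which is exactly what Pr\'ekopa--Leindler needs to give a \emph{lower} bound on $\int w^2\,d\gamma$, but is useless for controlling the Dirichlet energy from \emph{above}. More fundamentally, there is no ``reverse Pr\'ekopa--Leindler'' giving upper bounds on integrals of sup-convolutions, and the correspondence $x\mapsto(x_0,x_1)$ is neither injective nor measure-preserving, so the ``change of variables ... should turn the weighted products into the desired convex combination'' step cannot be made rigorous. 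Note also that, by the optimality condition $\nabla u_0(x_0)/u_0(x_0)=\nabla u_1(x_1)/u_1(x_1)$, your pointwise gradient estimate is in fact an equality, $|\nabla w(x)|=|\nabla u_0(x_0)|^{1-t}|\nabla u_1(x_1)|^{t}$, but there is no integral analogue of this pointwise identity that yields $\int|\nabla w|^2\,d\gamma\le(1-t)\int|\nabla u_0|^2\,d\gamma+t\int|\nabla u_1|^2\,d\gamma$.

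The paper closes this by never separating numerator and denominator. It shows that $u_t$ (your $w$) is a viscosity, hence distributional, subsolution of $\Delta u-\langle x,\nabla u\rangle+\lambda_t u\ge0$ in $\Omega_t$: at each $\xt$ where the supremum is realized at interior points $\xt_0,\xt_1$, the explicit function $\psi(x)=u_0(x-\xt+\xt_0)^{1-t}u_1(x-\xt+\xt_1)^{t}$ is $C^2$, touches $u_t$ from below at $\xt$, and satisfies the equation exactly there, precisely because $\nabla u_0(\xt_0)/u_0(\xt_0)=\nabla u_1(\xt_1)/u_1(\xt_1)$ cancels the cross-terms in $D^2\psi$. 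Multiplying the distributional inequality by $-u_t$ and integrating against $d\gamma$ (the boundary term vanishes since $u_t=0$ on $\partial\Omega_t$) gives $\int|\nabla u_t|^2\,d\gamma\le\lambda_t\int u_t^2\,d\gamma$ in one stroke, i.e.\ $R_\gamma(u_t)\le\lambda_t$, without ever needing a stand-alone bound on the energy. This differential-inequality-then-integrate mechanism is what your proposal lacks; without a substitute for it the Rayleigh-quotient argument does not close, and the remedies you anticipate (regularity, approximation by nice domains) do not address this structural issue.
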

As already observed, as $\lambda_\gamma$ is not homogeneous, \eqref{BM for lambda} does not imply a stronger inequality such as \eqref{concavity-lambda}. A similar fact happens for the volume: by the Pr\'ekopa-Leindler inequality (see \cite{Leindler,Prekopa-1971,Prekopa-1975}), for every measu\-rable sets $A_0, A_1$ and $t\in[0,1]$ such that $(1-t)A_0+tA_1$ is measurable, it holds
\begin{equation}\label{PL}
\gamma((1-t)A_0+t A_1)\ge\gamma(A_0)^{1-t}\gamma(A_1)^t.
\end{equation}
Inequality \eqref{PL} can be considered the Gaussian version of the Brunn-Minkowski inequality (see also \cite{Ehrhard-83, BorellEhrard} for the Ehrhard inequality, a stronger result). On the other hand, as it was first conjectured by Gardner and Zvavitch \cite{Gardner-Zvavitch}, and then proved by Eskenazis and Moschidis \cite{Eskenazis-Moschidis}, for convex sets $A_0$ and $A_1$ which are symmetric with respect to the origin, and for $t\in[0,1]$, it holds
$$
\gamma((1-t)A_0+tA_1)^{1/n}\ge
(1-t)\gamma(A_0)^{1/n}+t\gamma(A_1)^{1/n}
$$
(see also \cite{Kolesnikov-Livshyts} for important contributions to the conjecture by Gardner and Zvavitch). It would be interesting to understand whether a similar improvement holds for inequality \eqref{BM for lambda}.

\medskip

When restricting to suitably regular, symmetric with respect to the origin, convex sets, we also establish equality conditions for inequality \eqref{BM for lambda}.

\begin{theorem}\label{equality conditions} Let $\Omega_0, \Omega_1$ be open, bounded and convex subsets of $\R^n$.
Assume moreover that $\Omega_0$ and $\Omega_1$ are symmetric with respect to the origin, and $\partial\Omega_0$ and $\partial\Omega_1$ are of class $C^{2,\alpha}_+$, for some $\alpha\in(0,1)$. If, for some $t\in[0,1]$,
\begin{equation*}
\lambda_{\gamma}((1-t)\Omega_0+t\Omega_1)=(1-t)\lambda_{\gamma}(\Omega_0)+t\lambda_{\gamma}(\Omega_1),
\end{equation*}
then $\Omega_0=\Omega_1$.
\end{theorem}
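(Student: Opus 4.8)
The plan is to go back to the proof of Theorem~\ref{thm BM for lambda} and record exactly where, and how tightly, each inequality in it is used. Recall that \eqref{BM for lambda} is proved by inserting into the Gaussian Rayleigh quotient of $\Omega_t$ the sup--convolution
$$
v_t(z)=\sup\left\{u_0(x)^{1-t}u_1(y)^t\colon x\in\Omega_0,\ y\in\Omega_1,\ (1-t)x+ty=z\right\},\qquad z\in\Omega_t,
$$
where $u_0,u_1$ are eigenfunctions of $\Omega_0,\Omega_1$; equivalently, writing $\psi_i=-\log u_i$ and $\psi_t=-\log v_t$, the function $\psi_t$ is the $t$--weighted infimal convolution of $\psi_0$ and $\psi_1$. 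Under the assumption $\partial\Omega_i\in C^{2,\alpha}_+$, each $u_i$ is smooth and strictly log-concave in $\Omega_i$, so $\psi_i$ is smooth and strictly convex with $\psi_i\to+\infty$ at $\partial\Omega_i$; in particular $\nabla\psi_i$ is a diffeomorphism of $\Omega_i$ onto $\R^n$, the optimal splitting $z=(1-t)x_0(z)+tx_1(z)$ is unique and depends smoothly on $z$, and $\psi_t$ is smooth and strictly convex on $\Omega_t$. Since $Lu_i=-\lambda_\gamma(\Omega_i)u_i$ is equivalent to $\Delta\psi_i-|\nabla\psi_i|^2-\langle\nabla\psi_i,x\rangle=\lambda_\gamma(\Omega_i)$, integration by parts gives
$$
\int_{\Omega_t}|\nabla v_t|^2\,\d\gamma=\int_{\Omega_t}e^{-2\psi_t}\big(\Delta\psi_t-|\nabla\psi_t|^2-\langle\nabla\psi_t,\cdot\rangle\big)\,\d\gamma ,
$$
and at the optimal splitting one has the identities $\nabla\psi_t(z)=\nabla\psi_0(x_0)=\nabla\psi_1(x_1)$, $\psi_t(z)=(1-t)\psi_0(x_0)+t\psi_1(x_1)$, and the Hessian relation
$$
D^2\psi_t(z)^{-1}=(1-t)\,D^2\psi_0(x_0)^{-1}+t\,D^2\psi_1(x_1)^{-1}.
$$
By the matrix arithmetic--harmonic mean inequality, $D^2\psi_t(z)\preceq(1-t)D^2\psi_0(x_0)+tD^2\psi_1(x_1)$, hence $\Delta\psi_t(z)\le(1-t)\Delta\psi_0(x_0)+t\Delta\psi_1(x_1)$; inserting this and the identities above, the integrand is pointwise $\le\big[(1-t)\lambda_\gamma(\Omega_0)+t\lambda_\gamma(\Omega_1)\big]e^{-2\psi_t}$, so $\lambda_\gamma(\Omega_t)\le R_\gamma(v_t)\le(1-t)\lambda_\gamma(\Omega_0)+t\lambda_\gamma(\Omega_1)$, which is \eqref{BM for lambda}. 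The point of this recollection is that the \emph{only} inequality involved (beyond $\lambda_\gamma(\Omega_t)\le R_\gamma(v_t)$) is the passage $D^2\psi_t\preceq(1-t)D^2\psi_0+tD^2\psi_1$.

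Assume now that equality holds in \eqref{BM for lambda} for some $t\in(0,1)$ (the endpoint values being degenerate). Then $\lambda_\gamma(\Omega_t)=R_\gamma(v_t)=(1-t)\lambda_\gamma(\Omega_0)+t\lambda_\gamma(\Omega_1)$; in particular $v_t$ is a positive multiple of the eigenfunction of $\Omega_t$, and the pointwise bound on the integrand above is saturated for $\gamma$--a.e.\ $z$, hence for all $z\in\Omega_t$ by continuity. Since the identities used there are exact, this forces, for every $z\in\Omega_t$,
$$
\Delta\psi_t(z)=(1-t)\Delta\psi_0(x_0)+t\Delta\psi_1(x_1).
$$
The matrix $(1-t)D^2\psi_0(x_0)+tD^2\psi_1(x_1)-D^2\psi_t(z)$ is positive semidefinite with zero trace, hence it vanishes, so $D^2\psi_t(z)=(1-t)D^2\psi_0(x_0)+tD^2\psi_1(x_1)$. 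Combined with the Hessian relation displayed above, this is exactly the equality case of the matrix arithmetic--harmonic mean inequality, which for $t\in(0,1)$ forces $D^2\psi_0(x_0)=D^2\psi_1(x_1)\,(=D^2\psi_t(z))$ at every pair of corresponding points.

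Next I would read off the geometry. The map $z\mapsto x_0(z)$ equals $(\nabla\psi_0)^{-1}\circ\nabla\psi_t$, hence is a smooth bijection of $\Omega_t$ onto $\Omega_0$; its differential is $Dx_0(z)=D^2\psi_0(x_0)^{-1}D^2\psi_t(z)=I$ on the connected open set $\Omega_t$, so $x_0(z)=z+a$ for a constant $a\in\R^n$. Likewise $x_1(z)=z+b$, and $(1-t)a+tb=0$ because $(1-t)x_0(z)+tx_1(z)=z$. Therefore $\Omega_0=x_0(\Omega_t)=\Omega_t+a$ and $\Omega_1=\Omega_t+b$, so $\Omega_0$ and $\Omega_1$ are translates of one another, say $\Omega_0=\Omega_1+w$. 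Finally, using the origin-symmetry of $\Omega_0$ and $\Omega_1$, $\Omega_1+w=\Omega_0=-\Omega_0=-\Omega_1-w=\Omega_1-w$, whence $\Omega_1=\Omega_1+2w$, and since $\Omega_1$ is bounded we get $w=0$, that is $\Omega_0=\Omega_1$.

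The step I expect to be the main obstacle is not the rigidity argument itself but the regularity and nondegeneracy on which it rests: one must know that each eigenfunction $u_i$ is smooth and \emph{strictly} log-concave in $\Omega_i$, so that $D^2\psi_i\succ0$, $\nabla\psi_i$ is a global diffeomorphism onto $\R^n$, the infimal convolution $\psi_t$ is of class $C^2$ in $\Omega_t$, and $x_0,x_1$ are $C^1$ maps. This is precisely where the hypothesis $\partial\Omega_i\in C^{2,\alpha}_+$ enters, through the companion log-concavity result together with a strong maximum principle (constant-rank or Hopf-type) argument applied to $D^2(-\log u_i)$; one also has to justify the integration by parts for the competitor $v_t\in W^{1,2}_0(\Omega_t,\gamma)$, the smoothness of the sup--convolution in the interior of $\Omega_t$, and the properness of $\nabla\psi_t$ (which makes $x_0(\cdot)$ onto $\Omega_0$), all of which follow from $v_t\to0$ at $\partial\Omega_t$ and the strict convexity of the $\psi_i$.
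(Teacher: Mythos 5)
Your proposal is essentially the paper's own proof: the paper establishes the equality case through Theorem \ref{thm BM for lambda restricted}, whose argument passes, exactly as you do, through the smooth inf-convolution $\psi_t$, the Hessian identity $D^2\psi_t^{-1}=(1-t)D^2\psi_0^{-1}+tD^2\psi_1^{-1}$ at corresponding points, the convexity of $M\mapsto\operatorname{tr}(M^{-1})$ (your matrix arithmetic--harmonic mean inequality), and its equality case forcing $D^2\psi_0(x_0)=D^2\psi_1(x_1)$; the only cosmetic difference is that you read off $Dx_0(z)=I$ directly, whereas the paper argues at the level of the Legendre conjugates $v_i=\psi_i^*$ to conclude $\nabla v_0=\nabla v_1+\text{const}$, which is the same information. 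The final step (bounded origin-symmetric sets that are translates coincide) is identical.

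One point you should sharpen, because you slightly misattribute the difficulty: you say strict log-concavity of $u_i$ ``is precisely where the hypothesis $\partial\Omega_i\in C^{2,\alpha}_+$ enters,'' but regularity of $\partial\Omega_i$ alone is not enough. The constant-rank argument (Proposition~\ref{teo rango costante}) needs $\Delta(-\log u_i)>0$, equivalently $\Delta u_i<0$ in $\Omega_i$. This is where origin symmetry is used a \emph{second} time, not only in the final translation step: symmetry forces the maximum of $u_i$ to be at the origin, the convex superlevel sets are then starshaped about the origin, hence $\langle x,\nabla u_i\rangle\le 0$, and from $Lu_i=-\lambda_i u_i$ one reads $\Delta u_i=\langle x,\nabla u_i\rangle-\lambda_i u_i<0$. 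This is exactly the content of the paper's Corollaries~\ref{max0} and~\ref{symmetric}, and it is the chain that legitimizes the constant-rank step in Theorem~\ref{thm log-concavity lambda}. Without it, the rank could degenerate somewhere and $\nabla\psi_i$ need not be a diffeomorphism onto $\R^n$, so the optimal splitting $z\mapsto(x_0,x_1)$ need not be single-valued or $C^1$, and your differential computation $Dx_0=I$ would have no meaning. So the hypotheses on $\Omega_i$ feed the argument twice --- once to get strong convexity, once to force the translation to vanish --- and a complete write-up should say so.
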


Here $\partial\Omega\in C^{2,\alpha}_+$ means that the boundary of $\Omega$ is of class $C^{2,\alpha}$, and the Gauss curvature is strictly positive at every point of $\partial\Omega$.

\begin{remark} 
{\em In fact we treat equality conditions for a larger class of sets: convex sets such that the Laplacian of an eigenfunction is strictly positive. This is the content of Theorem \ref{thm BM for lambda restricted}, of which Theorem \ref{equality conditions} is a corollary. An intermediate result is Corollary \ref{cor1}, which refers to convex sets such that the maximum of an eigen\-function is at the origin. We prefer however to enlighten Theorem \ref{equality conditions} because here we have a clear and simple geometric conditions on the involved sets.}
\end{remark}

\medskip

Similarly to other functionals satisfying a Brunn-Minkowski type inequality (see for instance \cite{BS}, and \cite{S} for a related general theory), inequality \eqref{concavity-lambda} can be used to derive an Urysohn's type inequality for $\lambda$. This is a similar inequality to the previously mentioned Faber-Krahn inequality, where the volume is replaced by the {\em mean width}. We recall that, given an open bounded and convex set $\Omega$ and a direction $y\in\sfe$, the width of $\Omega$ in the direction $y$ is the distance between the supporting hyperplanes to $\Omega$ with outer unit normal $y$ and $-y$; we denote it by $w(\Omega,y)$. The mean width $w(\Omega)$ of $\Omega$ is then defined as
$$
w(\Omega)=\frac1{\mathcal{H}^{n-1}(\sfe)}\int_{\sfe}w(\Omega,y)\d{\mathcal H}^{n-1}(y),
$$ 
where ${\mathcal H}^{n-1}$ is the $(n-1)$-dimensional Hausdorff measure. Then, for every open bounded convex set $\Omega$, it holds
$$
\lambda(\Omega)\geq \lambda(\Omega'),
$$ 
where 
$$
\mbox{$\Omega'$ is any ball such that}\,\, w(\Omega')=w(\Omega).
$$

Similarly to the Lebesgue case, we get the following result.

\begin{corollary}\label{cor-min} Let $\Omega$ be an open, bounded and convex set in $\R^n$. Then
$$
\lambda_\gamma(\Omega)\geq \lambda_\gamma(\Omega^\sharp),
$$ 
where 
$$
\mbox{$\Omega^\sharp$ is a ball centered at $0$ such that}\,\, w(\Omega^\sharp)=w(\Omega).
$$
\end{corollary}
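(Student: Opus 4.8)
The plan is to realize $\Omega^\sharp$ as a rotational Minkowski average of $\Omega$ and then to feed this average into the convexity inequality \eqref{BM for lambda}, exactly as one derives Urysohn-type inequalities from Brunn--Minkowski-type inequalities in the classical theory. Two elementary facts set the stage. First, $\lambda_\gamma$ is invariant under rotations: for $\rho\in SO(n)$ the operator $L$ and the Gaussian weight $e^{-|x|^2/2}$ are unchanged by $x\mapsto\rho x$, hence the Rayleigh quotient $R_\gamma$ in \eqref{lambdagamma} is preserved and $\lambda_\gamma(\rho\Omega)=\lambda_\gamma(\Omega)$. Second, by a straightforward induction \eqref{BM for lambda} extends to finite convex combinations: for open, bounded sets $\Lambda_1,\dots,\Lambda_N$ and weights $s_i\ge0$ with $\sum_i s_i=1$ one has $\lambda_\gamma\big(\sum_i s_i\Lambda_i\big)\le\sum_i s_i\,\lambda_\gamma(\Lambda_i)$.

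Next I would fix the (nonempty) convex body $\Omega$, pick a sequence $\rho_1,\rho_2,\dots$ in $SO(n)$ equidistributing with respect to the Haar probability measure $\d\rho$, and set
$$
\Omega^{(N)}=\frac1N\sum_{i=1}^N\rho_i\Omega .
$$
The two facts above give $\lambda_\gamma(\Omega^{(N)})\le\frac1N\sum_{i=1}^N\lambda_\gamma(\rho_i\Omega)=\lambda_\gamma(\Omega)$ for every $N$. On the other hand, the support function of $\Omega^{(N)}$ is
$$
h_{\Omega^{(N)}}(u)=\frac1N\sum_{i=1}^N h_\Omega(\rho_i^{-1}u),\qquad u\in\sfe,
$$
and, since $\rho\mapsto\rho^{-1}u$ pushes $\d\rho$ forward to the normalized surface measure on $\sfe$, a standard equidistribution argument shows that these Riemann sums converge, uniformly on $\sfe$, to the constant
$$
r:=\frac1{\mh(\sfe)}\int_{\sfe}h_\Omega(v)\,\d\mh(v)=\frac1{2\,\mh(\sfe)}\int_{\sfe}\big(h_\Omega(v)+h_\Omega(-v)\big)\,\d\mh(v)=\frac{w(\Omega)}{2}>0 ,
$$
the last equality because $w(\Omega,v)=h_\Omega(v)+h_\Omega(-v)$, and positivity because the widths $w(\Omega,v)$ are positive. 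Uniform convergence of support functions is Hausdorff convergence of convex bodies, so $\Omega^{(N)}\to\Omega^\sharp$ in $\HD$, where $\Omega^\sharp$ is the ball of radius $r$ centred at $0$; and $w(\Omega^\sharp)=2r=w(\Omega)$, so this is precisely the ball in the statement.

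To conclude I would pass to the limit by a sandwiching trick that only uses the (trivial) behaviour of $\lambda_\gamma$ under dilations. Fix $\delta>0$: since $h_{\Omega^{(N)}}\to r$ uniformly and $h_{(1+\delta)\Omega^\sharp}\equiv(1+\delta)r>r$, one has $\Omega^{(N)}\subset(1+\delta)\Omega^\sharp$ for all large $N$, whence by monotonicity of $\lambda_\gamma$ with respect to inclusion $\lambda_\gamma\big((1+\delta)\Omega^\sharp\big)\le\lambda_\gamma(\Omega^{(N)})\le\lambda_\gamma(\Omega)$. Letting $\delta\to0^+$ and using the eigenfunction of $\Omega^\sharp$, rescaled to $(1+\delta)\Omega^\sharp$, as a test function in $R_\gamma$ (dominated convergence handles the smooth, bounded Gaussian weight on the bounded set $\Omega^\sharp$) gives $\lambda_\gamma\big((1+\delta)\Omega^\sharp\big)\to\lambda_\gamma(\Omega^\sharp)$; alternatively one may simply invoke the continuity of $\lambda_\gamma$ along Hausdorff-convergent sequences of convex bodies. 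Either way $\lambda_\gamma(\Omega^\sharp)\le\lambda_\gamma(\Omega)$, which is the assertion of Corollary~\ref{cor-min}. The argument is essentially soft: the only genuinely technical point is the harmless continuity/monotonicity step needed to remove the dilation factor $1+\delta$.
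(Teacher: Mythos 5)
Your overall strategy is the same as the paper's: realize $\Omega^\sharp$ as a Hausdorff limit of rotational Minkowski means of $\Omega$, use rotation invariance of $\lambda_\gamma$ together with the Brunn--Minkowski inequality~\eqref{BM for lambda} (extended by induction to finite convex combinations) to get $\lambda_\gamma$ of each mean $\le\lambda_\gamma(\Omega)$, and pass to the limit. The paper cites Hadwiger's theorem (\cite[Theorem 3.3.2]{OldSchneider}) for the convergence of the rotation means to a ball, whereas you supply a self-contained equidistribution argument via support functions; that is a harmless and arguably cleaner variant (and you also avoid the paper's unnecessary claim that $\lambda_\gamma(\Omega^\sharp_m)$ is itself a decreasing sequence, replacing it with the simpler and correct bound $\lambda_\gamma(\Omega^{(N)})\le\lambda_\gamma(\Omega)$ for all $N$).

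There is, however, a genuine gap in your explicit limiting step. You want to pass from $\lambda_\gamma\big((1+\delta)\Omega^\sharp\big)\le\lambda_\gamma(\Omega)$ to $\lambda_\gamma(\Omega^\sharp)\le\lambda_\gamma(\Omega)$, which requires the \emph{lower} bound $\lambda_\gamma(\Omega^\sharp)\le\liminf_{\delta\to0^+}\lambda_\gamma\big((1+\delta)\Omega^\sharp\big)$. But plugging the eigenfunction of $\Omega^\sharp$, rescaled to live on $(1+\delta)\Omega^\sharp$, into the Rayleigh quotient only gives the \emph{upper} bound $\lambda_\gamma\big((1+\delta)\Omega^\sharp\big)\le R_\gamma(v_\delta)\to\lambda_\gamma(\Omega^\sharp)$, i.e.\ $\limsup\le\lambda_\gamma(\Omega^\sharp)$, which is already obvious from monotonicity and does not help. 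To get the needed direction you should instead take the eigenfunction of $(1+\delta)\Omega^\sharp$ and compress it onto $\Omega^\sharp$ (controlling the distortion of the Gaussian weight on the bounded set), or argue by extracting a weakly $W^{1,2}$--convergent sequence of normalized eigenfunctions and using lower semicontinuity of the Dirichlet energy. Your ``alternatively'' clause --- invoking continuity of $\lambda_\gamma$ in the Hausdorff metric --- is exactly what the paper does and repairs the argument, but as stated your explicit test-function computation proves the wrong inequality.
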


\begin{remark}
{\em Notice that, unlike the Lebesgue case, the ball $\Omega^\sharp$ must be centered at the origin (since the Gaussian measure is rotation invariant, but not translation invariant), and, more importantly, Corollary \ref{cor-min} does not follow from the Gaussian Faber-Krahn inequality. On one hand, the log-concavity and the rotation-invariance of the Gaussian measure imply (via a similar argument) that whenever $\Omega^\sharp$ is an origin symmetric ball such that $w(\Omega)=w(\Omega^\sharp)$, then $\gamma(\Omega)\geq \gamma(\Omega^\sharp)$. On the other hand, as we mentioned earlier, half-spaces are the minimizers in the Gaussian version of the Faber-Krahn inequality, rather than balls. So, the fact that the ball (rather than the half-space) is the extremizer in Corollary \ref{cor-min} sets it apart from the cohort of inherently Gaussian isoperimetric-type inequalities, in which the half-space is extremal. Such inequalities include, of course, the Gaussian isoperimetric inequality (see e.g. \cite{AGMbook2}), the Gaussian Faber-Krahn inequality (see \cite{Ehrhard-83,CarKer}), the Gaussian Saint-Venant inequality (see e.g. \cite{Ehrhard-83,Liv}), the Gaussian Kohler-Jobin from \cite{HerLiv}, and a variety of other inequalities such as the Gaussian barycenter inequality, the Gaussian dilation inequality and others (see \cite{Liv-notes}). Therefore, perhaps Theorem \ref{thm BM for lambda} is a property related to rotation invariance of the measure rather than to its special Gaussian properties.}
\end{remark}

\subsection{Log-concavity of the solution}

Theorem \ref{thm BM for lambda} is deeply connected to a geometric property of the first eigen\-function of the operator $L$, like in the Lebesgue case. This is the content of the second main result of this paper.

\begin{theorem}\label{thm log-concavity lambda 3}
Let $\Omega$ be an open, bounded and convex subset of $\R^n$. Let $u$ be an eigenfunction of $\Omega$, that is, a solution of problem \eqref{EDP lambda} in $\Omega$. Then the function 
$$
W=-\ln(u)
$$
is convex in $\Omega$. 
\end{theorem}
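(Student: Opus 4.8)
The plan is to prove log-concavity of the eigenfunction $u$ by the classical constant-rank / continuity method, adapted to the Ornstein-Uhlenbeck setting. Writing $W=-\ln u$, a direct computation shows that $Lu=-\lambda_\gamma u$ translates into the quasilinear equation
\begin{equation*}
\Delta W-|\nabla W|^2-\langle\nabla W,x\rangle=\lambda_\gamma\qquad\text{in }\Omega .
\end{equation*}
We must show this $W$ is convex when $\Omega$ is convex. The natural approach is a homotopy: connect $\Omega$ to a ball (or an ellipsoid) through the family $\Omega_\tau=(1-\tau)\Omega+\tau B$, $\tau\in[0,1]$, with eigenfunctions $u_\tau$ and $W_\tau=-\ln u_\tau$; on a ball the eigenfunction is radial and one can check by hand that $W_1$ is strictly convex (indeed uniformly so). Then one wants to argue that the set of $\tau$ for which $W_\tau$ is convex is open and closed in $[0,1]$.

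First I would record the boundary behaviour: since $u>0$ inside and $u=0$ on $\partial\Omega$, $W\to+\infty$ at $\partial\Omega$, so $\nabla^2 W$ is, heuristically, very positive near the boundary; this is the ingredient that prevents the minimal eigenvalue of $\nabla^2 W$ from escaping through $\partial\Omega$. The core of the argument is a strong maximum principle / constant rank theorem for the Hessian of $W$. One shows that the matrix $\nabla^2 W$ satisfies a differential inequality of the form $\mathcal{L}(\nabla^2 W)\succeq (\text{lower order in }\nabla W,\nabla^2 W)$ obtained by differentiating the equation for $W$ twice; at an interior point where the smallest eigenvalue of $\nabla^2 W$ vanishes, the structure of the OU equation (the extra term $\langle\nabla W,x\rangle$ contributes $-\partial_{ij}\langle\nabla W,x\rangle = -\partial_i\partial_j W - \text{[first order]}$, which has the right sign) lets one conclude by a Bony-type or Caffarelli-Friedman-type propagation argument that $\nabla^2 W$ has constant rank, and combined with the boundary blow-up this forces $\nabla^2 W\succ 0$ everywhere. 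The homotopy then cannot lose convexity: if $\tau_0=\sup\{\tau:W_\tau\text{ convex}\}<1$ were attained as a degenerate limit, the constant-rank argument at $\tau_0$ would contradict minimality.

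The main obstacle I anticipate is making the constant-rank step rigorous in the presence of the drift term $\langle\nabla u,x\rangle$, which breaks translation invariance and means the linearized operator has $x$-dependent, non-divergence-structure first-order coefficients; one has to check carefully that the commutator terms arising when one differentiates $\langle\nabla W,x\rangle$ twice do not spoil the sign in the maximum-principle inequality. A second, more technical obstacle is regularity: one needs enough smoothness of $u$ up to (but not including) $\partial\Omega$ and control of its behaviour there — this can be handled by interior Schauder estimates for $W$ on compactly contained subdomains plus the asymptotics of $u$ near a smooth piece of boundary, but for merely convex (hence Lipschitz) $\Omega$ one should first prove the result for smooth strictly convex $\Omega$ and then pass to the limit via Hausdorff approximation, using continuity of $\lambda_\gamma$ (which follows from Theorem~\ref{thm BM for lambda} and monotonicity) and local uniform convergence of the normalized eigenfunctions.

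\textbf{Alternative.} An entirely PDE-free route, parallel to Brascamp-Lieb, would be to deduce Theorem~\ref{thm log-concavity lambda 3} from Theorem~\ref{thm BM for lambda} itself: the inequality $\lambda_\gamma(\Omega_t)\le(1-t)\lambda_\gamma(\Omega_0)+t\lambda_\gamma(\Omega_1)$, applied to translates and dilates of $\Omega$ and combined with a Prékopa-type analysis of the test functions built from $u$ on $\Omega_0$ and $\Omega_1$, can be made to yield that the "worst" test function on $\Omega_t$ is the geometric-mean-type combination, whose log-concavity on convex domains is equivalent to the assertion. I would keep this as a fallback, since the constant-rank method gives the stronger pointwise statement $\nabla^2 W\succ0$ directly and is the one most likely to also deliver the equality cases referenced in the remarks.
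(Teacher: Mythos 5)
Your main strategy (constant rank plus a deformation to a ball, in the Caffarelli--Friedman / Korevaar--Lewis tradition) is one of the alternative routes the paper explicitly acknowledges in Remark~\ref{otherproofs}, but it is genuinely different from the proof the paper actually gives for Theorem~\ref{thm log-concavity lambda 3}. The paper's proof is variational and much shorter: it re-runs the construction from the proof of Theorem~\ref{thm BM for lambda} with $\Omega_0=\Omega_1=\Omega$, producing the function
$$
u_t(x)=\max\bigl\{u(x_0)^{1-t}u(x_1)^t\colon (1-t)x_0+tx_1=x,\ x_0,x_1\in\Omega\bigr\},
$$
shows $u_t$ is a viscosity (hence distributional) subsolution of $Lu+\lambda_\gamma u=0$, multiplies by $-u_t$ and integrates by parts against $e^{-|x|^2/2}$ to get $R_\gamma(u_t)\le\lambda_\gamma(\Omega)$, and then invokes the variational characterization of $\lambda_\gamma$ to conclude $u_t$ is an eigenfunction. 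Since the first eigenvalue is simple, $u_t=au$ for some $a>0$, and $\max u_t=\max u$ forces $a=1$, so $u_t\equiv u$ for every $t$, which is exactly log-concavity. The key additional idea here --- beyond what Theorem~\ref{thm BM for lambda} supplies --- is the bootstrap via simplicity of the eigenvalue, and your "alternative" sketch does not contain it; as stated, deducing log-concavity from the scalar inequality \eqref{BM for lambda} alone is too vague to evaluate, since that inequality gives no direct information about the eigenfunction.

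On your main route, you correctly identify the two technical obstacles (the $x$-dependent drift and regularity of merely convex domains), but you underestimate one of them. The constant-rank theorem adapted to this equation (paper's Proposition~\ref{teo rango costante}) requires $\Delta W>0$, equivalently $\Delta u<0$, and that is \emph{not} automatic for an arbitrary convex $\Omega$ in the Gaussian setting: the paper verifies it only when the eigenfunction has its maximum at the origin (hence for centrally symmetric domains), and this restriction is precisely why the paper needs a separate argument for Theorem~\ref{thm log-concavity lambda 3} and reserves the constant-rank machinery for the strong convexity statement (Theorem~\ref{thm log-concavity lambda}) and the equality-case analysis. Your homotopy would also need continuous dependence of $u_\tau$ and $D^2 W_\tau$ on $\tau$ in a $C^2_{\mathrm{loc}}$ topology; this is plausible via interior Schauder estimates but is nontrivial and is not developed. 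So the proposal is a coherent strategy, not a proof: it would require establishing the $\Delta u<0$ condition (or working around it), filling in the openness step of the deformation, and then removing regularity by approximation --- whereas the paper's variational argument sidesteps all of this and delivers the theorem for general bounded convex $\Omega$ directly.
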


\subsection{The strategy of proofs and connections between Theorem \ref{thm BM for lambda} and Theorem \ref{thm log-concavity lambda 3}}

We start noticing that several techniques, developed to prove convexity type properties of solutions to elliptic boundary value problems, could be adapted to prove Theorems \ref{thm BM for lambda} and \ref{thm log-concavity lambda 3} (see Remark \ref{otherproofs}).

Moreover, Theorem \ref{thm log-concavity lambda 3} could be obtained as a consequence of \cite[Lemma 4]{BS}. Moreover, the Brunn-Minkowski inequality for $\lambda_\gamma$, in the case of convex sets, can be deduced as an application of \cite[Lemma 5.1]{S} (where comparison principle is not needed). To obtain Theorem \ref{thm BM for lambda} without the assumption of convexity, we use the technique of \cite{S} (where indeed convexity of the domains, as well as power concavity properties of the involved functions are not really needed in most of the results). Then, for the convenience of the reader, we will give also a complete proof of Theorem \ref{thm log-concavity lambda 3} as a suitable modification of the one of Theorem  \ref{thm BM for lambda}, to enlighten the strict relation among the two results.

Although we prove inequality \eqref{BM for lambda} for open bounded sets, we are able to establish its equality conditions only under more restrictive assumptions on the involved domains: central symmetry, regularity and strong convexity of the boundary. These assumptions permit to prove (via the constant rank method) a stronger version of Theorem \ref{thm log-concavity lambda 3}, namely that the logarithms of the involved eigen\-functions have everywhere positive definite Hessian matrices. This property allows to exploit effe\-ctively their Legendre-Fencel transforms. However, we believe that it should be possible to treat the equality case of the Brunn-Minkowski inequality for the Gaussian eigenvalue in the general case and we state the following conjecture.

\medskip

\noindent {\bf Conjecture.} {\em Equality holds in \eqref{BM for lambda} if and only if $\Omega_0$ and $\Omega_1$ coincide and are convex, up to removing null measure sets.}

\medskip

\noindent{\bf Acknowledgements.} The first, second and last authors are partially supported by INdAM through different GNAMPA projects, and by the Italian "Ministero dell'Universit\`a e Ricerca" and EU through different PRIN 2022 projects, within the Next Generation EU program. The third author was supported by the National Science Foundation grant NSF-BSF DMS-2247834.

\section{Preliminaries}

We work in the $n$-dimensional Euclidean space $\R^n$. We denote by $|\cdot|$ and $\langle\cdot,\cdot\rangle$ the Euclidean norm and the standard scalar product in $\R^n$, respectively. Given a subset $A$ of $\R^n$, we denote by $\cl(A)$ the closure of $A$. 

By $|\cdot |$ we denote the Lebesgue volume and $B^n_2$ is the unit Euclidean ball.

\subsection{The Ornstein-Uhlenbeck operator}
Let $\Omega$ be an open subset of $\R^n$, and $u\in C^2(\Omega)$. We set
$$
Lu(x)=\Delta u(x)-\langle\nabla u(x),x\rangle.
$$
We will refer to $L$ as to the Ornstein-Uhlenbeck operator (in the one dimensional case $L$ is involved also in the Hermite differential equation). 

Integration by parts shows that $L$ is self-adjoint with respect to the Gaussian measure: if $\Omega$ is an open subset of $\R^n$, and $\phi,\psi\in C^{\infty}_c(\Omega)$, then 
$$
\int_\Omega\varphi L\psi\D\gamma=-\int_\Omega\langle\nabla\varphi,\nabla\psi\rangle\D\gamma=
\int_\Omega\psi L\varphi\D\gamma.
$$

\subsection{Existence of the eigenvalue}

The following existence result is classical, see e.g. \cite{Gilbarg-Trudinger,Evans}.

\begin{theorem} Let $\Omega$ be an open and bounded subset of $\R^n$, with Lipschitz boundary. Then there exists a unique number $\lambda_\gamma>0$, such that the following problem
\begin{equation}\label{EDP lambda bis}
\left\{
\begin{array}{lll}
Lu=-\lambda_\gamma u,\quad u>0,\quad\mbox{in $\Omega$,}\\
\\
u=0\quad\mbox{on $\partial\Omega$,}
\end{array}
\right.
\end{equation}
admits a solution $u$ (which we call an {\em eigenfunction} of $\Omega$). The solution is unique up to multiplication by positive constants. 
\end{theorem}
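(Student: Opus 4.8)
The plan is to recognize $\lambda_\gamma$ as the bottom of the spectrum of a uniformly elliptic operator in divergence form and then run the classical variational/spectral argument. The first observation I would make is that $L$ is a weighted Laplacian: setting $\rho(x)=e^{-|x|^2/2}$, one has $\rho\,Lu=\dive(\rho\,\nabla u)$, so that problem \eqref{EDP lambda bis} is the Euler--Lagrange equation of the Gaussian Rayleigh quotient $R_\gamma$ on $H_0:=W^{1,2}_0(\Omega,\gamma)$, with $\lambda_\gamma=\inf_{v\in H_0\setminus\{0\}}R_\gamma(v)$ as in \eqref{lambdagamma}. Since $\Omega$ is bounded, $\rho$ is bounded above and below by positive constants on $\cl(\Omega)$, so $W^{1,2}_0(\Omega,\gamma)$ coincides with the usual $W^{1,2}_0(\Omega)$ with equivalent norms. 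Two consequences will be used repeatedly: the Poincar\'e inequality holds on $\Omega$ — whence $\lambda_\gamma>0$ — and the embedding $H_0\hookrightarrow L^2(\Omega,\gamma)$ is compact (Rellich).

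\emph{Existence.} Next I would produce a minimizer by the direct method. A minimizing sequence normalized in $L^2(\Omega,\gamma)$ is bounded in $H_0$, so a subsequence converges weakly in $H_0$ and strongly in $L^2(\Omega,\gamma)$ to a limit $u$ with unit $L^2(\gamma)$-norm; lower semicontinuity of the Dirichlet integral gives $R_\gamma(u)=\lambda_\gamma$, so $u$ is a minimizer and a weak solution of $Lu=-\lambda_\gamma u$ in $\Omega$ belonging to $W^{1,2}_0(\Omega)$. Since $|\nabla|u||=|\nabla u|$ a.e., $|u|$ is again a minimizer, so we may take $u\ge 0$. Interior elliptic (Schauder) regularity — the coefficients of $L$ are smooth and $L$ is uniformly elliptic on $\Omega$ — together with a bootstrap gives $u\in C^\infty(\Omega)$, i.e. $u$ is a classical solution inside $\Omega$. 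To upgrade $u\ge 0$ to $u>0$, I would rewrite the equation as $\Delta u-\langle\nabla u,x\rangle=-\lambda_\gamma u\le 0$: the nonnegative function $u$ is then a supersolution of the operator $u\mapsto\Delta u-\langle\nabla u,x\rangle$, which has \emph{no} zeroth-order term, so the strong minimum principle (equivalently, Harnack) applies directly and forces $u>0$ throughout $\Omega$ (here $\Omega$ is taken connected; otherwise one argues on each component). With $u\in W^{1,2}_0(\Omega)$ and $\partial\Omega$ Lipschitz, this establishes existence of a solution of \eqref{EDP lambda bis}.

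\emph{Uniqueness of the eigenvalue and simplicity.} For uniqueness of $\lambda_\gamma$, suppose $v>0$ solves $Lv=-\mu v$ in $\Omega$, $v=0$ on $\partial\Omega$, with $\mu>0$. Using that $L$ is self-adjoint with respect to $\gamma$ and that the boundary terms vanish (both functions vanish on $\partial\Omega$ and are regular enough), integration by parts yields
$$
0=\int_\Omega\bigl(v\,Lu-u\,Lv\bigr)\,\d\gamma=(\mu-\lambda_\gamma)\int_\Omega uv\,\d\gamma ,
$$
and $\int_\Omega uv\,\d\gamma>0$ since $u,v>0$, whence $\mu=\lambda_\gamma$. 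For simplicity (uniqueness up to a positive multiplicative constant) I would use that the $\lambda_\gamma$-eigenspace in $H_0$ is the linear subspace $\ker(L+\lambda_\gamma)$, all of whose nonzero elements are minimizers of $R_\gamma$: given two positive solutions $u_1,u_2$ and a point $x_0\in\Omega$, the combination $w=u_2(x_0)u_1-u_1(x_0)u_2$ is either $\equiv 0$ or a minimizer, in which case $|w|$ would be a nonnegative classical eigenfunction vanishing at the interior point $x_0$ — contradicting the strong minimum principle exactly as above. Hence $w\equiv 0$ and $u_1$ is a positive multiple of $u_2$.

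I expect the only points requiring genuine care to be the strict positivity and the simplicity, i.e. the two applications of the strong maximum principle: the zeroth-order coefficient in $Lu+\lambda_\gamma u=0$ has the "wrong" sign, and the device of moving it to the right-hand side and exploiting the sign of $u$ (so that $L$ acts as an operator without zeroth-order term) is what makes the principle applicable. Everything else — the weighted reformulation, the compact embedding, the direct method, and the interior regularity — is the standard template for self-adjoint elliptic eigenvalue problems, for which I would simply cite \cite{Gilbarg-Trudinger,Evans}.
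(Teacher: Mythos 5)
The paper does not prove this statement; it simply records it as ``classical, see e.g.\ \cite{Gilbarg-Trudinger,Evans}.'' Your proposal supplies exactly the standard argument that those references contain, and it is correct: the identity $\rho\,Lu=\dive(\rho\,\nabla u)$ with $\rho=e^{-|x|^2/2}$ puts $L$ in self-adjoint divergence form; on a bounded $\Omega$ the weight is bounded above and below, so $W^{1,2}_0(\Omega,\gamma)$ and $W^{1,2}_0(\Omega)$ agree with equivalent norms, giving Poincar\'e and Rellich; the direct method produces a minimizer, replacing $u$ by $|u|$ gives nonnegativity, Schauder bootstrap gives interior smoothness, and the strong minimum principle (applied, as you say, after moving $-\lambda_\gamma u$ to the right-hand side so that the operator has no zeroth-order term and the sign of $u$ can be exploited) yields strict positivity; self-adjointness gives uniqueness of the eigenvalue; and the linear-combination trick combined once more with the strong minimum principle gives simplicity. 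The one substantive hypothesis you correctly flag but the theorem statement omits is connectedness of $\Omega$: without it, a positive Dirichlet eigenfunction common to all components need not exist. Since the main theorems of the paper concern convex (hence connected) $\Omega$, this is an implicit standing assumption, and your parenthetical remark is the right way to handle it.
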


As mentioned in the introduction, $\lambda_\gamma$ can be characterized as follows:
$$
\lambda_\gamma=\inf\frac{\displaystyle{\int_\Omega|\nabla v|^2\D\gamma}}{\displaystyle{\int_\Omega v^2\D\gamma}}
$$ 
where the infimum is taken over all functions $v\in W^{1,2}(\Omega,\gamma)$ such that 
$$
\int_\Omega v^2\D\gamma>0. 
$$
In particular, any solution of \eqref{EDP lambda bis} is a minimizer of the previous problem and viceversa.

\section{The proofs of Theorems \ref{thm BM for lambda} and \ref{thm log-concavity lambda 3}}
\label{section viscosity}

We start recalling some notions from the theory of viscosity solutions, for which we refer the reader to \cite{userguide}.

Let $u,\varphi\colon\Omega\to\R$, where $\Omega$ is an open subset of $\R^n$. We say that $\varphi$ {\em touches $u$ from above at a point $\xt\in\Omega$} if $\varphi(\xt)=u(\xt)$ and $\varphi\geq u$ in a neighborhood of $\xt$. Similarly, $\varphi$ {\em touches $u$ from below at $\xt$} if $\varphi(\xt)=u(\xt)$ and $\varphi\leq u$ in a neighborhood of $\xt$.

Notice that if $\varphi$ and $\psi$ are twice differentiable at $\xt$ and $\varphi$ touches $\psi$ from above at $\xt$, then
\begin{equation}\label{useful}
\varphi(\xt)=\psi(\xt)\,,\quad \nabla\varphi(\xt)=\nabla\psi(\xt)\,,\quad D^2\varphi(\xt)\geq D^2\psi(\xt)\,.
\end{equation}

An upper continuous function $v$ is a {\em viscosity subsolution} of the equation
$$
\Delta u - \langle x,\nabla u\rangle+\lambda u=0
$$
in $\Omega$, if for every point $\xt\in\Omega$ and every test function $\varphi$ touching $u$ from above at $\xt$, it holds
$$
\Delta\varphi(\xt)-\langle\xt,\nabla\varphi(\xt)\rangle+\lambda\varphi(\xt)\geq 0\,.
$$

\medskip

In fact, we are here interested in {\em distributional solutions} of \eqref{EDP lambda bis}. Luckily, viscosity solutions and distributional solutions coincide in this case by a famous result of Ishii \cite{Ishii}, so allowing us to integrate by parts when needed, as we will see.

\begin{remark}
{\em The definition  of  viscosity solutions is based on the maximum principle (and especially on the fact that if $v\in C^{2}(\Omega)$ attains its maximum at $ x\in\Omega$, then $\nabla v(x)=0$ and $D^{2}v(x)\leq0$) and it is particularly useful when a comparison principle holds. On the other hand, here we are treating an eigenvalue problem, in which no comparison principle can work since the solution is insensitive to positive scalar multiplication, that is, if $u$ is a solution of problem \eqref{EDP lambda bis}, then $au$ is a solution too for any $a>0$.}
\end{remark}

\begin{proof}[Proof of Theorem \ref{thm BM for lambda}.] The proof largely follows the steps of \cite[Lemma 5.1]{S}, which in turn ge\-neralizes ideas introduced in \cite{MBS} and strongly inspired by \cite{Alvarez-Lasry-Lions}.

Let $\Omega_0$ and $\Omega_1$ be open and bounded. Let $u_i$ be the solution of problem \eqref{EDP lambda}, for $i=0,1$. For simplicity, set
$$
\lambda_i=\lambda_{\gamma}(\Omega_i)\,\text{ for }i=0,1\,,\quad \lambda_t=(1-t)\lambda_0+t\lambda_1\,.
$$
Moreover, for $x\in\Omega_t$ let
\begin{equation}\label{ut}
u_t(x)=\max\{u_0(x_0)^{1-t}u_1(x_1)^t\,:\,x_i\in\overline{\Omega}_i\,\text{for }i=0,1,\; x=(1-t)x_0+tx_1\}.
\end{equation}
Notice that $u_t\in C(\cl(\Omega_t))$ and that
$$
u_t>0\,\text{ in }\Omega_t,\quad u_t=0\,\text{ on }\partial\Omega_t,
$$
as $u_i\in C(\cl(\Omega_i))$, $u_i>0$ in $\Omega_i$ and $u_i=0$ on $\partial\Omega_i$ for $i=0,1$.
Furthermore, for every $\xt\in\cl(\Omega_t)$ there exist $\xt_0\in\cl(\Omega_0),\,\xt_1\in\cl(\Omega_1)$ where the maximum in the definition of \eqref{ut} is attained, that is 
\begin{equation}\label{x0x1}
\xt=(1-t)\xt_0+t\xt_1\,,\quad u_t(\xt)=u_0(\xt_0)^{1-t}u_1(\xt_1)^t.
\end{equation}
Notice that if $\xt\in\partial\Omega_t$, then clearly $\xt_i\in\partial\Omega_i$ for $i=0,1$, while if $\xt\in\Omega_t$, then $\xt_i\in\Omega_i$ for $i=0,1$ since $u_t(\xt)>0$. In the latter case, as $u_0,u_1$ are differentiable at $x_0,x_1$ respectively, we can also easily see that
\begin{equation}\label{nablatheta}
\frac{\nabla u_0(\xt_0)}{u_0(\xt_0)}=\frac{\nabla u_1(\xt_1)}{u_1(\xt_1)}:=\theta.
\end{equation}

Now we want to prove that $u_t$ is a viscosity subsolution of problem \eqref{EDP lambda} when $\Omega=\Omega_t$ and $\lambda=\lambda_t$. 
For showing this, we construct, for every point $\xt\in\Omega_t$, a $C^2$ function $\psi$ touching $u_t$ from below at $\xt$, and such that
$$
\Delta\psi(\xt)-\langle\xt,\nabla\psi(\xt)\rangle+\lambda_t\psi(\xt)=0\,.
$$
Indeed, this yields that every test function $\phi$ touching $u_t$ from above at $\xt$ must also touch $\psi$ from above at $\xt$ and \eqref{useful} gives 
$$
\Delta\phi(\xt)-\langle\xt,\nabla\phi(\xt)\rangle+\lambda_t\phi(\xt)\geq0\,,
$$
as desired. In fact, we need to construct $\psi$ only for the points $\xt$ such that $u_t(\xt)>u(\xt)$, indeed, if $u_t(\xt)=u(\xt)$, every test function touching $u_t$ from above at $\xt$, also touches $u$ from above at $\xt$ and the latter inequality is straightforward. Then, let $\xt\in\Omega_t$ be such that $u_t(\xt)>u(\xt)$ and let $\xt_0\in\Omega_0$ and $\xt_1\in\Omega_1$ be the points given by \eqref{x0x1}. In a sufficiently small neighborhood $B$ of $\xt$, we define
$$
\psi(x)=u_0(x-\xt+\xt_0)^{1-t}u_1(x-\xt+x_1)^t\,.
$$
Then, by the choice of $\xt_0$ and $\xt_1$ and the definition of $u_t$, it is easily seen that $\psi$ touches $u_t$ from below at $\xt$. Furthermore $\psi\in C^2(B)$ and we can calculate
$$
\nabla\psi(x)=\psi(x)\left[(1-t)\frac{\nabla u_0(x-\xt+\xt_0)}{u_0(x-\xt+\xt_0)}+t\frac{\nabla u_1(x-\xt+\xt_1)}{u_1(x-\xt+\xt_1)}\right]
$$
and
$$
\begin{array}{ll}D^2\psi(x)=&\dfrac{\nabla\psi(x)\times\nabla\psi(x)}{\psi(x)}+\\
\\
&-\psi(x)\left[(1-t)\dfrac{\nabla u_0(x-\xt+\xt_0)\times\nabla u_0(x-\xt+\xt_0)}{u_0(x-\xt+\xt_0)^2}\right.\\
\\
&\left.+t\dfrac{\nabla u_1(x-\xt+\xt_1)\times\nabla u_1(x-\xt+\xt_1)}{u_1(x-\xt+\xt_1)^2}\right]+\\
\\
&+\psi(x)\left[(1-t)\dfrac{D^2 u_0(x-\xt+\xt_0)}{u_0(x-\xt+\xt_0)}+t\dfrac{D^2 u_1(x-\xt+\xt_1)}{u_1(x-\xt+\xt_1)}\right],
\end{array}
$$
whence
\begin{equation}\label{nablapsi}
\nabla\psi(\xt)=\psi(\xt)\,\theta
\end{equation}
and
\begin{equation}\label{D2psi}
D^2\psi(\xt)=\psi(\xt)\left[(1-t)\dfrac{D^2 u_0(\xt_0)}{u_0(\xt_0)}+t\dfrac{D^2 u_1(\xt_1)}{u_1(\xt_1)}\right]\,,
\end{equation}
where in the former we have used \eqref{nablatheta} and in the latter we have used the former and again \eqref{nablatheta}.
From \eqref{nablapsi} and \eqref{D2psi} we finally get
\begin{equation}\label{deltapsi}
\begin{array}{ll}
\Delta\psi(\xt)&=\psi(\xt)\left[(1-t)\dfrac{\Delta u_0(\xt_0)}{u_0(\xt_0)}+t\dfrac{\Delta u_1(\xt_1)}{u_1(\xt_1)}\right]\\
\\
&=\psi(\xt)\left[(1-t)\dfrac{\langle\xt_0,\nabla u_0(\xt_0)\rangle-\lambda_0u_0(\xt_0)}{u_0(\xt_0)}+t\dfrac{\langle\xt_1,\nabla u_1(\xt_1)\rangle-\lambda_1u_1(\xt_1)}{u_1(\xt_1)}\right]\\
\\
&=\psi(\xt)\left[(1-t)\left(\langle\xt_0,\dfrac{\nabla u_0(\xt_0)}{u_0(\xt_0)}\rangle-\lambda_0\right)+t\left(\langle\xt_1,\dfrac{\nabla u_1(\xt_1)}{u_1(\xt_1)}\langle-\lambda_1\right)\right]\\
\\
&=\psi(\xt)\left[\langle(1-t)\xt_0+t\xt_1,\theta\rangle-\left((1-t)\lambda_0+t\lambda_1\right)\right]\\
\\
&=\langle\xt,\nabla\psi(\xt)\rangle-\lambda_t\psi(\xt).
\end{array}
\end{equation}
In the above series of equalities we have made use of the equations satisfied by $u_0$ and $u_1$, of \eqref{nablatheta} and of \eqref{nablapsi}.

So far, we proved that $u_t$ verifies the inequality
\begin{equation}\label{viscosity inequality ut}
\Delta u_t(x)-\langle\nabla u_t(x), x\rangle\ge-\lambda_t u_t\quad\text{in }\Omega_t
\end{equation}
in the viscosity sense, and consequently also in the distributional sense, see \cite[Theorem 1]{Ishii}.

Now multiply \eqref{viscosity inequality ut} by $-u_t(x)$ to get
$$
-u_t\Delta u_t(x)+u_t\langle\nabla u_t(x), x\rangle\le\lambda_tu_t^2\,,
$$
and integrate in $d\gamma$ on $\Omega_t$.  
Then, taking into account that
$$
-e^{-|x|^2/2}u_t\Delta u_t+u_t\langle\nabla u_t(x), x\rangle e^{-|x|^2/2}= -\text{div}(e^{-|x|^2/2}u_t\nabla u_t)+|\nabla u_t|^2e^{-|x|^2/2}
$$
and that $u_t$ vanishes on $\partial\Omega_t$, an integration by parts  gives
\begin{equation}\label{Ru_t}
R_\gamma(u_t)\leq\lambda_t\,,
\end{equation}
whence \eqref{BM for lambda} follows by the very definition of $\lambda_\gamma(\Omega_t)$.
\end{proof}

\begin{proof}[Proof of Theorem \ref{thm log-concavity lambda 3}.] The proof could obtained by applying \cite[Theorem 1]{BS}. However, it is useful to give here a complete proof to enlighten the relation between the Brunn-Minkowski inequality of the eigenvalue and the log-concavity property of the eigenfunction (for this, see also \cite{S}). We repeat the argument of the previous proof with $\Omega_0=\Omega_1=\Omega$, and consequently $u_0=u_1=u$ and $\lambda_t=\lambda_\gamma(\Omega)$. For $t\in[0,1]$, the function $u_t$ is defined similarly as in \eqref{ut}:
$$
u_t(x)=\max\{u(x_0)^{1-t}u(x_1)^t\colon x_0, x_1\in\Omega,\ (1-t)x_0+tx_1=x\}.
$$
Notice that the {\it log-concave envelope} $u^*$ of $u$, i.e. the smallest log-concave function greater than or equal to $u$ in $\Omega$, is given by
$$
u^*(x)=\sup\{u_t(x)\,:\, t\in[0,1]\}\,,
$$
and that $u\leq u_t\leq u^*$ in $\Omega$ for every $t\in[0,1]$, while
$$
\text{$u$ is log-concave (i.e. it coincides with $u^*$)  if and only if $u=u_t$ for every $t\in[0,1]$.}
$$ 

With the same steps as before, it can be proved that $u_t$ verifies, in the viscosity (and distributional) sense, the following inequality:
\begin{equation}\label{viscosity inequality ut 2}
\Delta u_t(x)-\langle\nabla u_t(x), x\rangle+\lambda_\gamma(\Omega)u_t\ge0
\end{equation}
in $\Omega$. Now we multiply by $-u_t$ and integrate by parts in $d\gamma$, as in the proof of Theorem \ref{thm BM for lambda}, to get 
$$
R_\gamma(u_t)\leq\lambda_\gamma(\Omega)\,,
$$
whence indeed $R_\gamma(u_t)=\lambda_\gamma(\Omega)$ and $u_t$ is an eigenfunction, then $u_t=au$ for some $a>0$. On the other hand, by the very definition if $u_t$, we have
$$
\max_{\overline\Omega}u_t=\max_{\overline\Omega}u\,,
$$ 
whence $a=1$ and $u_t$ coincides with $u$ in $\Omega$, as desired.
\end{proof}

\begin{remark}\label{otherproofs} {\em Under suitable assumptions, the log-concavity of the eigenfunction of a convex domain can be deduced also in several other ways, for instance by means of the concavity function (first developed by Korevaar \cite{Korevaar1} and Kennington \cite{Kennington}) or via the constant rank method (see \cite{Acker-Payne-Philippin,Caffarelli-Friedman,Korevaar-Lewis}, and \cite{Guan-Xu} for a survey on more recent developments of this method). We would also like to thank E. Milman \cite{Milman} for showing to us a further elegant proof based on a clever transform from the Ornstein-Uhlenbeck to the Schr\"odinger operator and on \cite[Theorem 6.1]{Brascamp-Lieb-1976}.}
\end{remark}

\begin{proof}[Proof of Corollary \ref{cor-min}.] The proof closely follows the steps of \cite[Corollary 2.2]{BS} and \cite[Section 6]{S}. The mean width is linear with respect to the Minkowski addition  and clearly invariant with respect to rigid motions. Hence, given a sequence of rotations (about the origin) $\{\rho_j\}_{j\in\N}\subset SO(n)$, we define the {\em rotation mean} $\Omega^\sharp_m$ of $\Omega$ as follows
$$
\Omega^\sharp_m=\frac{1}{m}\left(\rho_1\Omega+\dots+\rho_m\Omega\right)\,.
$$
and we get that 
$$
w(\Omega^\sharp_m)=w(\Omega)\,\,\text{for every }m\in\N\,.
$$
Moreover, the rotation-invariance of the Gaussian measure yields
\begin{equation}\label{rot-inv-property}
\lambda_\gamma(\rho_j\Omega)=\lambda_\gamma(\Omega)\,\,\text{for every }j\in\N\,,
\end{equation}
Then, inequality (\ref{BM for lambda}) combined with (\ref{rot-inv-property}) shows that 
$$
\lambda(\Omega^\sharp_m)\,\,\text{is a decreasing sequence}\,.
$$
On the other hand, by a theorem of Hadwiger, see \cite[Theorem 3.3.2]{OldSchneider}, there exists a sequence of rotation means of $\Omega$ converging in Hausdorff metric to a ball $\Omega^\sharp$, centered at the origin, with diameter equal to the mean width $w(\Omega)$ of $\Omega$. Finally, the conclusion follows, in view of the fact that $\lambda_{\gamma}$ is continuous in Hausdorff metric.
\end{proof}

\section{Strong log-concavity of the eigenfunction}\label{section constant rank}

In this section we see how, under additional symmetry and regularity assum\-ptions on $\Omega$, we can improve the conclusion of Theorem \ref{thm log-concavity lambda 3}, in the following sense: the Hessian of $-\ln(u)$ is positive definite in $\Omega$. This will allow us to investigate equality conditions of \eqref{BM for lambda} in some restricted case and then to prove Theorem \ref{equality conditions} in the final section of the paper.

\medskip
We say that the boundary of an open set $\Omega\subset\R^n$ is of class $C^{2,\alpha}_+$, for some $\alpha\in(0,1)$, if it is of class $C^{2,\alpha}$ and the Gauss curvature is strictly positive at every point of $\partial\Omega$. 

\begin{theorem}\label{thm log-concavity lambda}
Let $\Omega$ be an open, bounded and convex subset of $\R^n$, with boundary of class $C^{2,\alpha}_+$, for some $\alpha\in(0,1)$, and let $u$ be a solution of problem \eqref{EDP lambda} in $\Omega$. Assume that 
\begin{equation}\label{Deltamin0}
\Delta u<0\,\,\text{ in }\Omega\,.
\end{equation}
Then the function 
$$
W=-\ln(u)
$$
is {\em strongly convex} in $\Omega$, i.e.,
\begin{equation}\label{maximal rank}
D^2 W(x)>0\quad\forall\, x\in\Omega.
\end{equation}
\end{theorem}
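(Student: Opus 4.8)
The plan is to prove that $W=-\ln u$ has everywhere positive-definite Hessian via the \emph{constant rank method} applied to $D^2W$. We already know from Theorem \ref{thm log-concavity lambda 3} that $W$ is convex, so $D^2W\geq 0$ throughout $\Omega$; the task is to rule out a degenerate eigenvalue. First I would compute the PDE satisfied by $W$: from $Lu=-\lambda_\gamma u$ and $u=e^{-W}$ one gets $\Delta W-|\nabla W|^2-\langle x,\nabla W\rangle=\lambda_\gamma$, a uniformly elliptic equation with smooth coefficients (note $u\in C^{2,\alpha}(\overline\Omega)$ by elliptic regularity, since $\partial\Omega\in C^{2,\alpha}$, and $u>0$ in $\Omega$ so $W$ is as smooth as $u$ in the interior). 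Then I would invoke the constant rank theorem for convex solutions of semilinear equations of the form $F(D^2W,\nabla W,W,x)=0$ (as in \cite{Korevaar-Lewis,Caffarelli-Friedman} and the survey \cite{Guan-Xu}): one checks the structural condition that guarantees that the minimal eigenvalue of $D^2W$, if it vanishes somewhere, vanishes identically and $D^2W$ has constant rank $r<n$ on all of $\Omega$. For an equation of this type — where the only $D^2W$-dependence is through $\Delta W=\mathrm{tr}(D^2W)$ — the required concavity/structure hypothesis is classically satisfied, so the constant rank conclusion applies.

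Having reduced to the case that $D^2W$ has constant rank $r<n$, the next step is to derive a contradiction using the boundary hypotheses. Here the central symmetry and strong convexity of $\partial\Omega$ enter. The idea is that $D^2W$ having constant rank $r$ forces, locally, $W$ to be independent of $n-r$ directions after a change of coordinates — more precisely, the kernel distribution of $D^2W$ integrates to a foliation by affine subspaces along which $\nabla W$ is constant (this is the geometric content that accompanies the constant rank theorem). If $r<n$, there is a line (a translate of a kernel direction) along which $u$ is \emph{log-affine}, i.e. $u(x+sv)=u(x)e^{-\langle\theta,v\rangle s}$ for $s$ in some interval; but the line must exit the bounded convex domain $\Omega$ through $\partial\Omega$, where $u$ vanishes, contradicting positivity and log-affine behaviour (a positive log-affine function on an interval cannot vanish at an endpoint). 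This already gives $D^2W>0$ once we know $r\ge 1$; the hypothesis $\Delta u<0$ is what excludes $r=0$. Indeed, if $D^2W\equiv 0$ on an open set then $W$ is affine there, hence $u$ is log-affine, so $\Delta u=u(|\nabla W|^2-\Delta W)=u|\nabla W|^2\geq 0$ — and in fact $\Delta u = u|\nabla W|^2$, which combined with $Lu=-\lambda_\gamma u$ gives $u(|\nabla W|^2+\langle x,\nabla W\rangle)=-\lambda_\gamma u$; in any case $\Delta u\geq 0$ contradicts \eqref{Deltamin0}. So $r\geq 1$, and the previous paragraph upgrades this to $r=n$.

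I expect the main obstacle to be making the constant rank step fully rigorous rather than citing it: one must verify that the operator $F(M,p,z,x)=\mathrm{tr}(M)-|p|^2-\langle x,p\rangle-\lambda_\gamma$ satisfies the precise structural hypotheses (a certain concavity of $F^{-1}$ or the Bian--Guan condition) needed so that a convex solution has $D^2W$ of constant rank, and to handle the fact that the relevant results are usually stated for $C^2$ convex functions solving $F=0$ — one should confirm interior smoothness of $W$ first (bootstrapping from $u\in C^{2,\alpha}$, then higher regularity from the smooth equation satisfied by $u$, so $u\in C^\infty(\Omega)$ and $W\in C^\infty(\Omega)$). A secondary subtlety is the precise argument that a degenerate kernel direction produces a genuine log-affine segment reaching $\partial\Omega$: the cleanest route is to use strong convexity and $C^{2,\alpha}_+$ regularity of $\partial\Omega$ only insofar as they guarantee $\overline\Omega$ is compact and $u\in C(\overline\Omega)$ with $u=0$ on $\partial\Omega$, so that the foliation-by-lines argument yields the contradiction; the regularity and curvature hypotheses are then really being stockpiled for the later equality-analysis (Theorem \ref{equality conditions}) and the use of Legendre transforms, not strictly for \eqref{maximal rank} itself. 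Finally, I would remark that \eqref{Deltamin0} is a genuine restriction (it holds, e.g., when the eigenfunction attains its maximum at an interior point with nondegenerate Hessian, which is ensured for origin-symmetric $C^{2,\alpha}_+$ convex sets) and flag that it is exactly the hypothesis entering Theorem \ref{thm BM for lambda restricted}.
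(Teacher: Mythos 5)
Your setup matches the paper: from $Lu=-\lambda_\gamma u$ one gets $\Delta W=\lambda_\gamma+|\nabla W|^2+\langle x,\nabla W\rangle$, Theorem \ref{thm log-concavity lambda 3} gives $D^2W\ge0$, and one then wants to invoke a constant rank argument. But from there you diverge from the paper's proof, and your route has a real gap.

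The paper's proof combines three ingredients: (i) convexity of $W$ from Theorem \ref{thm log-concavity lambda 3}; (ii) Proposition \ref{teo rango costante}, a custom Korevaar--Lewis-type constant rank theorem for equations $\Delta v=F(v,\nabla v)+\langle\nabla v,x\rangle$ with $F_{tt}\le0$ \emph{and} $\Delta v>0$; and (iii) Proposition \ref{boundary behavior}, a boundary-behaviour lemma showing that $D^2W>0$ in a full neighbourhood of $\partial\Omega$ whenever $\partial\Omega\in C^{2,\alpha}_+$. Constant rank plus rank $n$ near the boundary forces rank $n$ everywhere. Two of your assertions about the role of the hypotheses are therefore off: the condition $\Delta u<0$ is there precisely to guarantee $\Delta W>0$, which is a hypothesis of Proposition \ref{teo rango costante} (your use of $\Delta u<0$ only to exclude $r=0$ is in any case subsumed by your $r<n$ discussion); and the $C^{2,\alpha}_+$ assumption is not merely ``stockpiled for later'' --- it is essential to Proposition \ref{boundary behavior}, which is where the strict positivity of $D^2W$ enters in the paper.

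The genuine gap in your argument is the step replacing Proposition \ref{boundary behavior}: you claim that constant rank $r<n$ makes the kernel of $D^2W$ integrate to a foliation by affine leaves along which $\nabla W$ is constant, so that $u$ is log-affine along a line exiting $\Omega$. That would indeed give a contradiction with $u>0$ in $\Omega$, $u=0$ on $\partial\Omega$. But ``the null space of the Hessian is a parallel distribution'' is a strictly stronger statement than constant rank; it is the Bian--Guan refinement of the constant rank theorem and does not follow formally from $D^2W\ge0$ together with $\rank D^2W\equiv r$. (Convexity only gives $\frac{\d^2}{\d s^2}W(x_0+sv)\ge0$ with equality at $s=0$ when $v\in\ker D^2W(x_0)$; it does not by itself force the restriction to stay affine, because the kernel direction may vary with the point.) The paper's own Proposition \ref{teo rango costante} proves only constant rank, not the parallel-null-space property, and extending it to yield that property for this particular equation (with the $\langle x,\nabla v\rangle$ term, which the paper handles separately in the proof) would require additional structural checks you have not supplied. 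A related soft spot is your remark that ``the required concavity/structure hypothesis is classically satisfied'' --- the explicit $x$-dependence is exactly why the paper proves a bespoke Proposition \ref{teo rango costante} rather than quoting Korevaar--Lewis verbatim.

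In short: the high-level plan (log-concavity $+$ constant rank) is right, but the final upgrade from constant rank to maximal rank is where the paper does real work via Proposition \ref{boundary behavior}, and your substitute for it rests on an unproven parallel-foliation claim.
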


The proof is based on the so called constant rank method, introduced in \cite{Acker-Payne-Philippin} and subsequently developed in \cite{Caffarelli-Friedman}, \cite{Korevaar-Lewis}. We also point out that recently this approach led to new important results in the theory of geometric properties of solutions to elliptic and parabolic PDE's. We refer the reader to recent papers about this subject, like \cite{Ma-Ou,Guan-Xu} and therein references.  

\medskip

Before proving Theorem \ref{thm log-concavity lambda} (whose proof will be indeed a straightforward application of Theorem \ref{thm log-concavity lambda 3} and two technical results, namely Proposition \ref{boundary behavior} and Proposition \ref{teo rango costante}, to be proven hereafter), let us see two simple, but interesting, corollaries.
\begin{corollary}\label{max0}
Let $\Omega$ be an open, bounded and convex subset of $\R^n$, with boundary of class $C^{2,\alpha}_+$, for some $\alpha\in(0,1)$, and let $u$ be a solution of problem \eqref{EDP lambda} in $\Omega$. Assume that $u(0)=\max_{\overline\Omega}u$. Then  the function 
$$
W=-\ln(u)
$$
is {\em strongly convex} in $\Omega$.
\end{corollary}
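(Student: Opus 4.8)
The plan is to deduce Corollary \ref{max0} from Theorem \ref{thm log-concavity lambda} by showing that the hypothesis $u(0)=\max_{\overline\Omega}u$ forces the sign condition \eqref{Deltamin0}, i.e.\ $\Delta u<0$ throughout $\Omega$. Once this is established, all hypotheses of Theorem \ref{thm log-concavity lambda} are met and the strong convexity of $W=-\ln u$ follows immediately.

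First I would record two preliminary facts. Since $u>0$ in $\Omega$, $u=0$ on $\partial\Omega$, and $\Omega$ is a nonempty open bounded set, $\max_{\overline\Omega}u>0=\max_{\partial\Omega}u$, so the maximum is attained in the interior; the hypothesis therefore means $0\in\Omega$ and $u(0)=\max_{\overline\Omega}u$, whence $\nabla u(0)=0$. By interior elliptic regularity $u\in C^{\infty}(\Omega)$ and $u>0$ there, so $W=-\ln u\in C^{\infty}(\Omega)$, with $W(0)=\min_{\Omega}W$ and $\nabla W(0)=0$. Moreover, by Theorem \ref{thm log-concavity lambda 3} the function $W$ is convex in $\Omega$ (that step uses only that $\Omega$ is open, bounded and convex).

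Next I would translate the eigenvalue equation into an identity for $W$. From $u=e^{-W}$ one computes $\nabla u=-u\,\nabla W$ and $D^2u=u\,(\nabla W\otimes\nabla W-D^2W)$, hence $\Delta u=u\,(|\nabla W|^2-\Delta W)$ and $\langle x,\nabla u\rangle=-u\,\langle x,\nabla W\rangle$; substituting into $\Delta u-\langle x,\nabla u\rangle=-\lambda_\gamma u$ and dividing by $u>0$ gives
\[
\Delta W-|\nabla W|^2-\langle x,\nabla W\rangle=\lambda_\gamma\quad\text{in }\Omega,
\qquad\text{equivalently}\qquad
\frac{\Delta u}{u}=-\lambda_\gamma-\langle x,\nabla W\rangle .
\]
Thus it suffices to show $\langle x,\nabla W(x)\rangle\ge 0$ for every $x\in\Omega$. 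This is where convexity of $W$ and the location of its minimum combine: applying the subgradient inequality for the convex function $W$ at the point $x$, evaluated at $0\in\Omega$, yields $W(0)\ge W(x)+\langle\nabla W(x),-x\rangle$, i.e.\ $\langle x,\nabla W(x)\rangle\ge W(x)-W(0)\ge 0$, the last inequality because $W(0)=\min_{\Omega}W$ (equivalently $u(x)\le u(0)$). Consequently $\Delta u/u\le-\lambda_\gamma<0$ in $\Omega$, so $\Delta u<0$ in $\Omega$.

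Finally I would invoke Theorem \ref{thm log-concavity lambda}: $\Omega$ is open, bounded, convex with $\partial\Omega\in C^{2,\alpha}_+$, $u$ solves \eqref{EDP lambda}, and \eqref{Deltamin0} has just been verified; hence $D^2W(x)>0$ for all $x\in\Omega$, which is the assertion. I do not anticipate a genuine obstacle: the only point requiring a little care is ensuring that $0$ is an \emph{interior} point of $\Omega$ (so that the segment from $0$ to $x$ lies in $\Omega$ and the subgradient inequality is legitimate), and this follows from $\max_{\overline\Omega}u>0$ together with $u\equiv 0$ on $\partial\Omega$; the rest is a direct chain through Theorems \ref{thm log-concavity lambda 3} and \ref{thm log-concavity lambda}.
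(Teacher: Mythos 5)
Your proof is correct and takes essentially the same route as the paper: reduce to checking $\Delta u<0$ via $\langle x,\nabla u\rangle\le 0$ (equivalently $\langle x,\nabla W\rangle\ge 0$), then invoke Theorem \ref{thm log-concavity lambda}. The paper derives the sign of $\langle x,\nabla u\rangle$ geometrically (superlevel sets of the log-concave $u$ are starshaped about the interior maximum $0$), whereas you use the first-order convexity inequality for $W$ at $0$ — the same observation in analytic form; both then feed the sign into the equation to get $\Delta u\le-\lambda_\gamma u<0$.
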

\begin{proof}
By Theorem \ref{thm log-concavity lambda 3}, $u$ is log-concave, then its level sets are all convex. Since $0$ is a maximum point, it is contained in every superlevel set, which is then starshaped with respect to $0$. This implies $\langle x,\nabla u\rangle\leq 0$ in $\Omega$, whence $\Delta u<0$ and we can apply Theorem \ref{thm log-concavity lambda}.
\end{proof}
\begin{corollary}\label{symmetric}
Let $\Omega$ be an open, bounded and convex subset of $\R^n$, with boundary of class $C^{2,\alpha}_+$, for some $\alpha\in(0,1)$, and let $u$ be a solution of problem \eqref{EDP lambda} in $\Omega$. Assume furthermore $\Omega$ be symmetric with respect to the origin. Then  the function 
$$
W=-\ln(u)
$$
is {\em strongly convex} in $\Omega$.
\end{corollary}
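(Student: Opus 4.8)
The plan is to deduce Corollary \ref{symmetric} from Corollary \ref{max0}: I would show that the eigenfunction $u$ inherits the central symmetry of $\Omega$, and that an even log-concave function necessarily attains its maximum at the origin.

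First I would observe that the Ornstein--Uhlenbeck operator $L$ commutes with the reflection $x\mapsto -x$. Indeed, if $v(x):=u(-x)$, then $\nabla v(x)=-\nabla u(-x)$ and $\Delta v(x)=\Delta u(-x)$, so
$$
Lv(x)=\Delta v(x)-\langle\nabla v(x),x\rangle=\Delta u(-x)-\langle\nabla u(-x),-x\rangle=(Lu)(-x)=-\lambda_\gamma(\Omega)\,u(-x)=-\lambda_\gamma(\Omega)\,v(x).
$$
Since $\Omega=-\Omega$, the function $v$ solves the same boundary value problem \eqref{EDP lambda} in $\Omega$, with the same eigenvalue $\lambda_\gamma(\Omega)$, and is positive in $\Omega$. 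By the simplicity of $\lambda_\gamma$ (uniqueness of the solution of \eqref{EDP lambda} up to a positive multiplicative constant), we get $v=c\,u$ in $\Omega$ for some $c>0$. Note that $0\in\Omega$: picking any $x\in\Omega$, central symmetry gives $-x\in\Omega$ and convexity gives $0=\tfrac12 x+\tfrac12(-x)\in\Omega$. Evaluating $v=c\,u$ at $0$, where $u(0)>0$, forces $c=1$, so $u$ is even, i.e. $u(-x)=u(x)$ for all $x\in\Omega$.

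Next, by Theorem \ref{thm log-concavity lambda 3} the function $W=-\ln(u)$ is convex in $\Omega$, and it is even since $u$ is even. For an even convex function, convexity applied to the midpoint of $x$ and $-x$ gives
$$
W(0)=W\!\left(\tfrac12 x+\tfrac12(-x)\right)\le \tfrac12 W(x)+\tfrac12 W(-x)=W(x)
$$
for every $x\in\Omega$, so $0$ is a minimum point of $W$ in $\overline\Omega$, equivalently $u(0)=\max_{\overline\Omega}u$. Therefore the hypotheses of Corollary \ref{max0} are satisfied, and we conclude that $W=-\ln(u)$ is strongly convex in $\Omega$.

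The argument is short and essentially a bookkeeping reduction; the only points requiring a little care are the use of the simplicity of the first eigenvalue to upgrade ``$v$ solves the problem'' to ``$v=u$'', and the (elementary but necessary) remark that a nonempty convex set symmetric about the origin contains the origin, so that the symmetrization argument and the evaluation $c=1$ make sense. No genuine obstacle is expected beyond these.
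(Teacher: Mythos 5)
Your proof is correct and follows the same route as the paper's: reduce to Corollary \ref{max0} by showing $u$ is even (hence has its maximum at $0$). The paper states this reduction in one sentence (``the symmetry of $\Omega$ easily implies that $u$ is also symmetric... which is then a maximum point''); you have simply supplied the two justifications it leaves implicit — equivariance of $L$ under $x\mapsto -x$ together with simplicity of the eigenvalue to get $u$ even, and log-concavity from Theorem \ref{thm log-concavity lambda 3} to upgrade ``$0$ is a critical point'' to ``$0$ is the maximum.''
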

\begin{proof}
The symmetry of $\Omega$ easily implies hat $u$ is also symmetric with respect to the origin, which is then a maximum point of $u$ and we can apply the previous corollary.
\end{proof}

\subsection{Boundary behaviour}

We will first show a result on the boundary behavior, that will be used in the proof of the above theorem as a companion of the constant rank theorem.

Given $X=(X_1,\dots,X_n), Y=(Y_1,\dots,Y_n)\in\R^n$, we denote by $X\otimes Y$ the $n\times n$ matrix with $ij$-th entry equal to $X_iY_j$, for $i,j=1,\dots,n$. Moreover, if $M$ is a square symmetric matrix, the notation $M>0$ means that $M$ is positive definite. 

\begin{proposition}\label{boundary behavior} Let $\Omega$ be an open, bounded and convex subset of $\R^n$; assume that $\partial\Omega$ is of class $C^{3,\alpha}$, $\alpha\in(0,1)$, and the Gauss curvature is strictly positive at each point of $\partial\Omega$. Let $w\in C^3(\bar\Omega)$ be such that 
$$
w>0\;\mbox{in $\Omega$,}\quad w=0\;\mbox{on $\partial\Omega$,}\quad\frac{\partial w}{\partial\nu}<0\;\mbox{on $\partial\Omega$,}
$$
where $\nu$ denotes the outer unit normal to $\partial\Omega$. Then there exists $\varepsilon>0$ such that
\begin{equation}\label{bb1}
\nabla w(x)\otimes\nabla w(x)-w(x)D^2 w(x)>0,
\end{equation}
for every $x\in\bar\Omega$ such that $\dist(x,\partial\Omega)<\varepsilon$.
In particular, setting
$$
W=-\ln(w).
$$
we have 
$$
D^2W(x)>0,\quad\forall\, x\in\{y\in\Omega\colon\dist(y,\partial\Omega)<\varepsilon\}.
$$
\end{proposition}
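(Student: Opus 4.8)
The plan is to work locally near an arbitrary boundary point $x_0\in\partial\Omega$, straighten the boundary, and perform a Taylor expansion of both $w$ and $\nabla w$ in the normal direction. Since $\partial\Omega$ is $C^{3,\alpha}$, the boundary behaviour of $w$ is governed by the signed distance function $d(x)=\dist(x,\partial\Omega)$ (taken positive inside $\Omega$), which is itself $C^{3}$ in a one-sided tubular neighbourhood of $\partial\Omega$ (here strict positivity of the Gauss curvature, hence of $\partial\Omega$, is not strictly needed for $d$ to be smooth, but it keeps the domain genuinely convex and will be used for the uniform lower bound). Because $w=0$ on $\partial\Omega$ and $\partial w/\partial\nu<0$ there, we can write $w=d\cdot g$ near $\partial\Omega$, where $g\in C^{2}(\overline{\Omega}\cap\{d<\varepsilon_0\})$ and $g>0$ on $\partial\Omega$ (indeed $g=-\partial w/\partial\nu>0$ on $\partial\Omega$; this is a standard Hopf-type factorization). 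By continuity and compactness of $\partial\Omega$ there is a constant $c_0>0$ with $g\ge c_0$ on a neighbourhood $\{d<\varepsilon_1\}$.

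Next I would compute the matrix $M(x):=\nabla w\otimes\nabla w - w\,D^2 w$ in terms of $g$ and $d$. Using $\nabla w = g\nabla d + d\nabla g$ and $D^2 w = g\,D^2 d + \nabla d\otimes\nabla g + \nabla g\otimes\nabla d + d\,D^2 g$, one gets
\begin{equation*}
M(x) = g^2\,\nabla d\otimes\nabla d + d\,\big(\text{bounded symmetric matrix}\big).
\end{equation*}
More precisely, the term of order $d^0$ is exactly $g^2\,\nabla d\otimes\nabla d$ (the cross terms $gd(\nabla d\otimes\nabla g + \nabla g\otimes\nabla d)$ and the $w\,D^2 w$ contributions all carry at least one factor of $d$), and $|\nabla d|=1$. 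Thus $M(x) = g(x)^2\,\nu(x)\otimes\nu(x) + O(d(x))$ uniformly, where $\nu(x)=-\nabla d(x)$ is the (extended) outer normal. This shows $M$ degenerates only in the tangential directions as $d\to 0$; to recover positive definiteness near the boundary one must use the strict convexity of $\partial\Omega$, which forces the \emph{tangential} part of $-w D^2 w$, i.e. essentially $-w g\,D^2 d$ restricted to $T_{x_0}\partial\Omega$, to be positive of order $d$. Indeed, for a $C^2$ strictly convex boundary, $D^2 d$ restricted to the tangent space equals (minus) the second fundamental form divided by appropriate factors, which is negative definite on $T\partial\Omega$ with a curvature lower bound $\kappa_0>0$; hence $-w g\,D^2 d \ge c_0^2\kappa_0\, d\, \Pi_{T}$ on the tangent directions, for $d$ small, where $\Pi_T$ is the orthogonal projection onto $T\partial\Omega$.

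Putting these together: write any unit vector $\xi$ as $\xi = a\,\nu(x) + \eta$ with $\eta\perp\nu(x)$, $a^2+|\eta|^2=1$. Then $\langle M\xi,\xi\rangle \ge c_0^2 a^2 + c_1\kappa_0\, d\, |\eta|^2 - C\,d\,|a|\,|\eta| - C d a^2$ for suitable constants; for $d$ small enough this is bounded below by a positive multiple of $a^2 + d|\eta|^2$, which is positive whenever $\xi\ne 0$. A compactness argument over $\partial\Omega$ makes the choice of $\varepsilon$ uniform. Once \eqref{bb1} is established, the statement about $W=-\ln w$ is immediate: $D^2 W = -D^2 w/w + (\nabla w\otimes\nabla w)/w^2 = M(x)/w(x)^2$, and $w>0$ in $\Omega$, so $D^2 W>0$ on $\{y\in\Omega : \dist(y,\partial\Omega)<\varepsilon\}$.

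The main obstacle is the tangential degeneration: the leading-order term $g^2\nu\otimes\nu$ only sees the normal direction, so one genuinely needs the first-order-in-$d$ term, and one must verify carefully that strict convexity of $\partial\Omega$ makes that term positive on the tangent space with a sign and a rate that \emph{dominates} the indefinite cross terms. This is where the $C^{3,\alpha}$ regularity and the strict positivity of the Gauss curvature enter — they guarantee both that $d$ is $C^3$ (so $D^2 d$ is $C^1$ and the error terms are genuinely $O(d)$ with uniform constants) and that the second fundamental form has a uniform positive lower bound. The bookkeeping of which terms are $O(d)$ versus $O(1)$, and the block-matrix positivity estimate at the end, are the technical heart; the rest is routine.
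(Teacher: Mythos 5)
Your proof is correct, and it takes a genuinely different route from the paper's. The paper proves $D^2W>0$ near $\partial\Omega$ by working with the elementary symmetric functions $S_k(D^2W)$ in a local frame adapted to the level sets of $w$; it splits $S_k(D^2W)$ into a part carrying the normal second derivative $W_{nn}$, which scales like $w^{-(k+1)}$, and a remainder scaling like $w^{-k}$, and lets the former dominate as $w\to0$. You instead factor $w=d\,g$ via the (inner) distance function $d$, compute the matrix $M=\nabla w\otimes\nabla w-w\,D^2w$ directly, and read off the normal rank-one leading term $g^2\,\nabla d\otimes\nabla d$ plus a first-order-in-$d$ tangential correction coming from $-w\,D^2 d$, whose positivity on the tangent space follows from the curvature lower bound on $\partial\Omega$. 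Both approaches rest on the same two facts (nondegenerate normal derivative and a uniform curvature lower bound), but the mechanism is different: one tracks all the $S_k$'s and exploits a scaling hierarchy in powers of $w$, the other is a single block-matrix estimate based on a Hopf-type factorization. Your route is arguably more transparent geometrically. One small point you understate in your own favor: with $w=dg$, the mixed terms $dg\bigl(\nabla d\otimes\nabla g+\nabla g\otimes\nabla d\bigr)$ coming from $\nabla w\otimes\nabla w$ cancel \emph{exactly} against the identical terms in $w\,D^2w$, leaving
\[
M=g^2\,\nabla d\otimes\nabla d-d\,g^2\,D^2 d+d^2\bigl(\nabla g\otimes\nabla g-g\,D^2g\bigr)\,;
\]
moreover $D^2 d\,\nabla d=0$ where $d$ is smooth. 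So in the decomposition $\xi=a\,\nabla d+\eta$ with $\eta\perp\nabla d$ there are in fact no normal--tangential cross terms of order $d$ at all, only an $O(d^2)$ remainder, and the final positivity estimate is cleaner than the one you wrote: $\langle M\xi,\xi\rangle\geq g^2a^2+\tfrac{1}{2}\kappa_0 g^2\,d\,|\eta|^2-Cd^2$, which is manifestly positive for $d$ small since one of $a^2$, $|\eta|^2$ is at least $1/2$.
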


\begin{proof} As 
$$
\frac{\partial w}{\partial\nu}\ne0
$$
on $\partial\Omega$, by continuity there exists $\varepsilon_0>0$ such that 
$$
\frac m2\le|\nabla w(x)|\le 2M
$$ 
for every $x$ which verifies $\dist(x,\partial\Omega)\le\epsilon_0$. 

Let $x\in\bar\Omega$ be such that $\nabla w(x)\ne0$ and $t=w(x)$; we denote by $\kappa_i(x)$, $i=1,\ldots,(n-1)$ the principal curvatures of the level set $\{w=t\}$ at the point $x$. In particular, if $x\in\partial\Omega$, these are the principal curvatures of $\partial\Omega$ at $x$. Let $\eta>0$ be such that $\kappa_i(x)\ge\eta$ for every $i=1,\dots,(n-1)$, and for every $x\in\partial\Omega$. Up to reducing the value of $\varepsilon_0$ we may assume that
$$
\kappa_i(x)\ge\frac\eta2,
$$ 
for every $x$ such that $\dist(x,\partial\Omega)\le\epsilon_0$ and for every $i=1,\ldots,n-1$. Let 
$$
\Omega_0:=\{x\in\bar\Omega\colon 0\le\dist(x,\partial\Omega)\le\varepsilon_0\}.
$$

For $k=1,\ldots,n-1$, we denote by $\sigma_k(x)$ the $k$-th elementary symmetric function of the numbers $\kappa_i(x)$:
$$
\sigma_k(x)=\sum_{1\le i_1<i_2\dots<i_k\le n-1}\kappa_{i_1}\kappa_{i_2}\cdots\kappa_{i_k}.
$$ 
For completeness, we set $\sigma_0(x)=1$, for every $x$. For $x\in\bar\Omega$, and for $k\in\{1,\ldots,n\}$, let
$$
S_k(D^2 W(x))
$$ 
be the $k$-th elementary symmetric function of the eigenvalues of $D^2 W(x)$, with $S_0(D^2 W(x))=1$ for every $x$. We are going to prove that, for a suitable $\varepsilon>0$, $\varepsilon\le\varepsilon_0$, 
\begin{equation}\label{ksf positive}
S_k(D^2W(x))>0,
\end{equation} 
for every $k=1,\ldots,n$, and for every $x\in\bar\Omega$ such that $\dist(x,\partial\Omega)\le\varepsilon$. Condition \eqref{ksf positive} is equivalent to
$$
D^2 W(x)>0,
$$ 
hence this will conclude the proof. 

Let us fix $x_0\in\Omega_0$. We may choose a coordinate system such that the first $(n-1)$ coordinates are parallel to the directions of the principal curvatures of the level set $\{W=W(x_0)\}$, and the $n$-th coordinate is parallel to $\nabla W(x_0)$. With respect to this system we have 
\medskip
$$
D^2W(x_0)=
\left(
\begin{array}{cccccc}
\kappa_1(x_0)|\nabla W(x_0)|&0&0&\ldots&0&W_{1n}(x_0)\\
\\
0&\kappa_2(x_0)|\nabla W(x_0)|&0&\ldots&0&W_{2n}(x_0)\\
\\
\vdots&\vdots&\vdots&\ddots&\vdots&\vdots\\
\\
0&0&0&\ldots&\kappa_{n-1}(x_0)|\nabla W(x_0)|&W_{(n-1)n}(x_0)\\
\\
W_{n1}(x_0)&W_{n2}(x_0)&W_{n3(x_0)}&\ldots&W_{n(n-1)}(x_0)&W_{nn}(x_0)
\end{array}
\right)
$$
\medskip
Moreover:
$$
|\nabla W(x_0)|=\frac{|\nabla w(x_0)|}{w(x_0)},\quad W_{nj}(x_0)=W_{jn}(x_0)=-\frac{w_{nj}(x_0)}{w(x_0)},\quad W_{nn}(x_0)=\frac{|\nabla w(x_0)|^2}{w^2(x_0)}-\frac{w_{nn}(x_0)}{w(x_0)}.
$$
The quantity $S_k(D^2W(x_0))$ can be expressed as follows:
\begin{equation}\label{Sk}
S_k(D^2W(x_0))=\frac1k\sum_{1\le i_1<\ldots<i_k\le n,\;1\le j_1<\ldots<j_k\le n}\delta
\left(
\begin{array}{cccc}
i_1&i_2&\ldots&i_k\\
j_1&j_2&\ldots&j_k
\end{array}
\right)
W_{i_1j_1}\dots W_{i_kj_k},
\end{equation}
where the symbol
$$
\delta
\left(
\begin{array}{cccc}
i_1&i_2&\ldots&i_k\\
j_1&j_2&\ldots&j_k
\end{array}
\right)
$$
is equal to $1$ (respectively, to $-1$), if $(i_1,\dots,i_k)$ is an even (respectively, odd) permutation of $(j_1,\dots,j_k)$, and it is $0$ otherwise.

We write 
\begin{equation*}
S_k(D^2W(x_0))=A+B
\end{equation*}
where $A$ contains all the terms of the sum in \eqref{Sk} such that $i_k=j_k=n$, and $B$ all the other terms. Then
\begin{eqnarray*}
A&=&\frac1k |\nabla W(x_0)|^{k-1}W_{nn}(x_0)\sigma_{k-1}(x_0)\\
&=&\frac{|\nabla w(x_0)|^{k-1}}{kw(x_0)^{k+1}}(|\nabla w(x_0)|^2-w(x_0)w_{nn}(x_0))\sigma_{k-1}(x_0).
\end{eqnarray*}
On the other hand
$$
B=\frac1{w(x_0)^{k}}F(x_0)
$$
where $F=F(x)$, $x\in\Omega_0$, is the sum of products of first and second derivatives of $w$. In particular there exists a constant $N>0$ such that 
$$
|F(x)|\le N
$$
for every $x\in\Omega_0$. Hence, in an arbitrary point $x\in\Omega_0$, we have
$$
S_k(D^2W(x))=\frac{1}{kw(x)^{k+1}}\left[|\nabla w(x)|^{k+1}\sigma_{k-1}(x)+kw(x)F(x)\right].
$$
Recall that, in $\Omega_0$:
$$
|\nabla w(x)|^{k+1}\sigma_{k-1}(x)\ge c m^{k+1}\eta^{k-1}
$$
for some dimensional constant $c>0$, where $m>0$ is such that $|\nabla u(x)|\ge m$ for every $x\in\partial\Omega$. As 
$$
\lim_{x\to\partial\Omega} w(x)=0,
$$
there exists $\varepsilon\in(0,\varepsilon_0)$ such that 
$$
0\le w(x)\le\frac{c m^{k+1}\eta^{k-1}}{4N}
$$
for every $x$ such that $\dist(x,\partial\Omega)\le\varepsilon$. We have proved that
$$
S_k(D^2W(x))>0
$$
for every $x$ such that $\dist(x,\partial\Omega)\le\varepsilon$.
\end{proof}

\subsection{The constant rank theorem}
The following result is a slight generalization of the constant rank theorem proved by Korevaar and Lewis \cite[Theorem 1]{Korevaar-Lewis}.
\begin{proposition}\label{teo rango costante} Let $F\colon\R\times\R^n\to\R$ be a function of class $C^2(\R\times\R^n)$. Assume that
$$
F_{tt}(t,y)\le0\quad\forall (t,y)\in\R\times\R^n.
$$
Let $\Omega$ be an open, bounded and convex subset of $\R^n$. Let $v\in C^4(\Omega)$ be a solution of the equation
\begin{equation}\label{EDP per v}
\Delta v=F(v,\nabla v)+\langle\nabla v(x),x\rangle\quad\mbox{in $\Omega$.}
\end{equation}
Assume moreover that
$$
\Delta v>0\quad\mbox{in $\Omega$.}
$$
If $v$ is convex in $\Omega$, then $D^2 v$ has constant rank in $\Omega$. 
\end{proposition}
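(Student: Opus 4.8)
The plan is to follow the constant rank argument of Korevaar--Lewis \cite{Korevaar-Lewis} (see also \cite{Caffarelli-Friedman}), checking that the only features of the equation one actually uses are its uniform ellipticity, the smoothness of its coefficients, and the concavity of $F$ in the variable $t$. Write $W=D^2v$ and let $\ell=\min_{x\in\Omega}\rank W(x)$; since $\rank W$ takes values in $\{0,\dots,n\}$ and $\{\rank W\ge k\}$ is open for each $k$, this minimum is attained. Because $v$ is convex we have $W\ge0$, and because $\Delta v=\trace W>0$ we have $W\not\equiv0$, so $1\le\ell$; if $\ell=n$ there is nothing to prove, hence one may assume $1\le\ell\le n-1$. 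The goal is to show that $G:=\{x\in\Omega\colon\rank W(x)=\ell\}$ equals $\Omega$. It is nonempty, and it is relatively closed in $\Omega$ (by minimality $G=\{\rank W\le\ell\}$, the complement of the open set $\{\rank W\ge\ell+1\}$); since $\Omega$ is convex, hence connected, it suffices to prove that $G$ is open. To this end one fixes $z_0\in G$ and produces a neighbourhood of $z_0$ on which $\rank W\le\ell$, which by minimality forces $\rank W=\ell$ there.

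The mechanism is to differentiate \eqref{EDP per v} twice. Writing $v_i=\partial_iv$, $v_{ij}=\partial_{ij}v$, and so on, and letting $F$ and its derivatives be evaluated at $(v,\nabla v)$, one finds for the entries $W_{ij}=v_{ij}$ an equation of the form
\begin{equation*}
\mathcal{L}W_{ij}:=\Delta W_{ij}-\big\langle x+\nabla_yF,\,\nabla W_{ij}\big\rangle
=F_{tt}\,v_iv_j+\sum_{k,l}F_{y_ky_l}\,W_{ki}W_{lj}+L_{ij}+(F_t+2)W_{ij},
\end{equation*}
where $L_{ij}=\sum_l F_{ty_l}\big(W_{lj}v_i+W_{li}v_j\big)$ is linear in the entries of $W$, and $\mathcal{L}=\Delta-\langle x+\nabla_yF,\nabla\,\cdot\,\rangle$ is a uniformly elliptic operator with bounded, locally Lipschitz coefficients on every subset compactly contained in $\Omega$ (here the hypothesis $v\in C^4(\Omega)$ is used). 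The crucial point is that, as a symmetric matrix, $F_{tt}\,(\nabla v\otimes\nabla v)\le0$, thanks to the assumption $F_{tt}\le0$; this sign is precisely what the constant rank computation requires, and it is the one place where the concavity of $F$ in $t$ enters.

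The core step is a differential inequality for an auxiliary function vanishing at $z_0$. In a neighbourhood $U$ of $z_0$ the eigenvalues of $W$ split into $\ell$ ``good'' ones, bounded away from $0$ on $U$ by continuity, and $n-\ell$ ``bad'' ones, small on $U$ and all vanishing at $z_0$. Following the by now standard choice (Bian--Guan), one sets
\[
\phi=\sigma_{\ell+1}(W)+\frac{\sigma_{\ell+2}(W)}{\sigma_{\ell+1}(W)},
\]
where $\sigma_k(\cdot)$ denotes the $k$-th elementary symmetric function of the eigenvalues and the quotient is read as $0$ where $\sigma_{\ell+1}(W)=0$; then $\phi\ge0$ on $U$ since $W\ge0$, and $\phi(z_0)=0$. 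Using the equation for $W_{ij}$ above, the positive semidefiniteness $W\ge0$, the sign $F_{tt}(\nabla v\otimes\nabla v)\le0$, and $\Delta v>0$ to control the error terms, a delicate but routine computation — decomposing all quantities along the good/bad eigenspaces and absorbing the third-order terms via Cauchy--Schwarz — should yield a constant $C>0$ and a bounded vector field $b$ on $U$ with
\[
\mathcal{L}\phi\ \ge\ \langle b,\nabla\phi\rangle-C\phi\qquad\text{in }U.
\]
Since $\phi\ge0$ and $\phi(z_0)=0$, the strong minimum principle then forces $\phi\equiv0$ on a possibly smaller neighbourhood of $z_0$, i.e.\ $\sigma_{\ell+1}(W)\equiv0$ and $\rank W\le\ell$ there; together with minimality this gives $\rank W=\ell$ near $z_0$. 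Hence $G$ is open, and therefore $G=\Omega$, i.e.\ $D^2v$ has constant rank $\ell$ in $\Omega$.

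The step I expect to be the main obstacle is exactly the differential inequality for $\phi$: controlling the third-order terms (the $\sum_k v_{kij}^2$-type quantities that appear when $\sigma_{\ell+1}(W)$ is differentiated twice) and showing that, after the good/bad splitting and using $W\ge0$, $F_{tt}\le0$ and $\Delta v>0$, they are dominated by $\phi$ and $\nabla\phi$; some care is also needed at the points where $\sigma_{\ell+1}(W)$ vanishes (one works in the viscosity sense there, or regularizes slightly). Everything else — the openness/connectedness bookkeeping, the strong minimum principle, and the freedom (already invoked via \cite{Ishii}) to pass between the classical, distributional and viscosity formulations — is standard.
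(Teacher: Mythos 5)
Your overall strategy is the same constant rank method the paper uses: differentiate the equation twice, introduce an elementary-symmetric-function quantity that vanishes exactly where the rank is minimal, show that it satisfies a one-sided differential inequality near such points, and combine the strong minimum principle with the open-and-closed argument in the connected set $\Omega$. Your derivation of the equation for $W_{ij}=v_{ij}$ is correct, and you correctly account for the two contributions of the drift term $\langle\nabla v,x\rangle$ (the transport piece $\langle x,\nabla W_{ij}\rangle$ absorbed into $\mathcal{L}$ and the reaction piece $2W_{ij}$, whence $(F_t+2)W_{ij}$). You also correctly identify that $F_{tt}\le 0$ is the one sign condition that enters and that $\Delta v>0$ is what guarantees a nonempty ``good'' set.

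However, there are genuine gaps. The central one is that you leave the crucial differential inequality for $\phi$ unproved and explicitly flag it as ``the main obstacle''; that estimate is the entire content of the proposition once the reduction is set up. The paper does not redo this computation from scratch: it observes that the relevant identities (3.3), (3.4), (3.6), (3.8) of \cite{Korevaar-Lewis} involve only the derivatives of $v$ and are therefore equation-independent, imports them, and then supplies the single new ingredient needed for the present equation, namely that $H:=\langle\nabla v,x\rangle$ satisfies $\sum_{i\in B}H_{ii}=2\sum_{i\in B}v_{ii}+\sum_k x_k\sum_{i\in B}v_{kii}\sim 0$ in the $\lesssim$-calculus. If you want a self-contained argument rather than a citation, this is the step you must carry out; your proposal neither cites nor proves it. A second point: you replace $\sigma_{r+1}(D^2v)$ by the Bian--Guan quantity $\sigma_{\ell+1}+\sigma_{\ell+2}/\sigma_{\ell+1}$. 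That refinement is not needed for this operator and introduces extra technicalities (regularity and the $0/0$ convention) that you would then have to handle; the paper, following Korevaar--Lewis, uses the plain $\sigma_{r+1}$. Finally, your claimed inequality $\mathcal{L}\phi\ge\langle b,\nabla\phi\rangle-C\phi$ has the wrong direction: since $\phi\ge0$ with an interior zero, the strong minimum principle requires a \emph{supersolution} bound of the form $\mathcal{L}\phi\le\langle b,\nabla\phi\rangle+C\phi$ (equivalently $\Delta\phi\le c_1|\nabla\phi|+c_2\phi$, as in the paper), not the subsolution bound you wrote.
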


\begin{proof} We closely follow the scheme of \cite{Korevaar-Lewis}. Let $x_0$ be a point where the rank of $D^2 v$ is minimum. If $r=n$, there is nothing to prove. Hence, for the rest of the proof we assume that
$$
\rank(D^2 v(x_0))=r<n.
$$
For $x\in\Omega$, we denote by $\varphi(x)$ the elementary symmetric function of the eigenvalues of $D^2v(x)$, of order $(r+1)$. Then
$$
\varphi(x)\geq 0\mbox{ in }\Omega \mbox{ and }\varphi(x_0)=0.
$$
We will prove that there exists positive constants $c_1$ and $c_2$, such that
\begin{equation}\label{PDE per phi}
\Delta\varphi(x)\le c_1|\nabla\varphi(x)|+c_2\varphi(x)\quad\mbox{in a neighborhood of $x_0$.}
\end{equation}
By the strong maximum principle, this implies that $\varphi\equiv0$ in a neighborhood of $x_0$. Hence the set
$$
\Omega'=\{x\in\Omega\colon \varphi(x)=0\}\subset\Omega
$$
is open, and, obviously, closed in $\Omega$; consequently, as $\Omega$ is connected, $\Omega'$ coincides with $\Omega$. This implies that 
$$
\rank(D^2 v(x))\equiv r\quad\mbox{in $\Omega$.}
$$  

Adopting the notation of \cite{Korevaar-Lewis} (and previously introduced in \cite{Caffarelli-Friedman}), given two functions $f$ and $g$ in $C^1(\Omega)$, and $x\in\Omega$, we write:
$$
f(x)\lesssim g(x)
$$
if there exist $c_1$ $c_2>0$, such that
$$
f(x)-g(x)\le c_1|\nabla\varphi(x)|+c_2\varphi(x).
$$
The notation $f(x)\sim g(x)$ means that $f(x)\lesssim g(x)$ and $g(x)\lesssim f(x)$. If we omit the variable $x$ in one of the previous relations, we indicate that the relation holds for every $x$ in a neighborhood of $x_0$, with constants $c_1$ and $c_2$ independent of $x$. 

Hence \eqref{PDE per phi} can be written as
\begin{equation}\label{PDE per phi 2}
\Delta\varphi\lesssim0.
\end{equation}
In what follows, ``locally'' means in a suitable neighborhood of $x_0$. The matrix $D^2 v(x_0)$ has $r$ positive eigenvalues (and $(n-r)$ zero eigenvalues). Hence we may assume that in a neighborhood $\mathcal U$ of $x_0$, $r$ eigenvalues of $D^2v(x)$ are bounded from below by some a constant $c>0$. Let $z\in{\mathcal U}$, and choose a coordinate system such that $D^2 v(z)$ is a diagonal matrix (note that the equation verified by $v$ is invariant under rotations). Then we may assume that
$$
v_{jj}(z)\ge c,\quad j=1,\ldots,r.
$$
In what follows, we will use some formulas obtained in the proof of Theorem 1 in \cite{Korevaar-Lewis}. A comment is in order: the equation considered in \cite{Korevaar-Lewis} is different from \eqref{EDP per v}. On the other hand, the formulas that we will use, involving the derivatives of the solution $v$, are independent of the equation.

We set $G=\{1,\ldots,r\}$ and $B=\{r+1,\dots,n\}$. The following relation is (3.6) in \cite{Korevaar-Lewis}:
\begin{equation}\label{KL}
\Delta\varphi\sim Q\sum_{i\in B}(\Delta v)_{ii}-2\sum_{i\in B}\sum_{j\in G}|\nabla v_{ij}|^2 Q_j-R\sum_{i,j\in B}|\nabla u_{ij}|^2, 
\end{equation} 
where:
$$
Q=\prod_{j\in G} v_{jj},\quad Q_j=\frac{Q}{v_{jj}}, \; j\in G,\quad R=\sum_{j\in G} Q_j.
$$
Note that, by the convexity of $v$, $Q\ge0$, $Q_j\ge0$, for every $j\in G$, $R\ge 0$. Consequently
\begin{equation}\label{KL2}
\Delta\varphi\lesssim Q\sum_{i\in B}(\Delta v)_{ii}. 
\end{equation} 

As established in \cite{Korevaar-Lewis}, (3.8),
\begin{equation}\label{KL3}
\sum_{i\in B}(F( v,\nabla v))_{ii}\sim F_{tt}(v,\nabla v)\sum_{i\in B}v_i^2.
\end{equation}
Now let $H\colon\Omega\to\R$ be defined by
$$
H(x)=\langle\nabla v(x),x\rangle.
$$
Then, for every $i=1,\ldots,n$, 
$$
H_{ii}=2v_{ii}+\sum_{k=1}^nx_k v_{kii}.
$$
Therefore
\begin{equation}\label{KL4}
\sum_{i\in B}H_{ii}=2\sum_{i\in B} v_{ii}+\sum_{k=1}^n x_k\sum_{i\in B}v_{kii}.
\end{equation}
On the other hand (see again \cite{Korevaar-Lewis}, (3.3) and (3.4))
\begin{equation}\label{KL5}
\sum_{i\in B} v_{ii}\sim0,\quad \sum_{i\in B}v_{kii}\sim0,
\end{equation}
for every $k=1,\ldots,n$. 

Now \eqref{PDE per phi 2} follows from \eqref{KL2}, \eqref{KL3}, \eqref{KL4} and \eqref{KL5}.
\end{proof}

\begin{proof}[Proof of Theorem \ref{thm log-concavity lambda}]
Set $v=-\log u$ and write the equation for $v$:
$$
\Delta v= \lambda_\gamma+|\nabla v|^2+\langle x,\nabla v\rangle\,.
$$
Notice that 
$$
\Delta v=-\frac{\Delta u}{u}+\frac{|\nabla u|^2}{u}>0\quad\text{in }\Omega\,,
$$
thanks to assumption \eqref{Deltamin0}. Then the proof is a straightforward consequence of the combination of Theorem \ref{thm log-concavity lambda 3}, Proposition \ref{teo rango costante} and Proposition \ref{boundary behavior}.
\end{proof}

\section{The equality conditions of Theorem \ref{thm BM for lambda}}\label{section equality conditions}

Hereafter we will discuss the equality case of inequality \eqref{BM for lambda}, first giving a more general result, then obtaining as consequence an intermediate result, which in turn has Theorem \ref{equality conditions} as a corollary. 

To this aim, we restrict to a special class of convex sets, so that we can exploit Theorem \ref{thm log-concavity lambda} and follow similar ideas to those used in \cite[Section 5]{Colesanti} for the standard case, that is, when the Gauss measure is replaced by the Lebesgue measure.

\begin{theorem}\label{thm BM for lambda restricted} Let $\Omega_0, \Omega_1$ be an open, bounded and convex subset of $\R^n$, with boundary of class $C^{2,\alpha}_+$, for some $\alpha\in(0,1)$. Let $u_0$ and $u_1$ be solutions of \eqref{EDP lambda} with $\Omega=\Omega_0$ and $\Omega=\Omega_1$, respectively, and assume that 
\begin{equation}\label{Delta0}
\Delta u_0<0\,\,\text{ in }\Omega_0\,\,\text{ and }\,\,\Delta u_1<0\,\,\text{ in }\Omega_1\,.
\end{equation}

Let $t\in[0,1]$ and set
$$
\Omega_t=(1-t)\Omega_0+t\Omega_1.
$$
Then, equality holds in \eqref{BM for lambda}, i.e.
\begin{equation}\label{BM for lambda restricted}
\lambda_{\gamma}(\Omega_t)= (1-t)\lambda_{\gamma}(\Omega_0)+t\lambda_{\gamma}(\Omega_1),
\end{equation}
if and only if $\Omega_0$ and $\Omega_1$ coincide up to a translation.
\end{theorem}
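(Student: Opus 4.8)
The plan is to adapt the argument of \cite[Section~5]{Colesanti} for the Lebesgue case: from the equality case I will extract a rigid pointwise identity relating the Hessians of $-\ln u_0$, $-\ln u_1$ and of the logarithm of the sup-convolution $u_t$, and then pass to Legendre transforms, where this identity becomes the equality case of the operator convexity of matrix inversion. Throughout I take $t\in(0,1)$ (for $t\in\{0,1\}$ the statement is degenerate), and I concentrate on the ``only if'' implication; the converse is immediate when $\Omega_0=\Omega_1$.

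The first step is to upgrade $u_t$ to a smooth eigenfunction of $\Omega_t$. The proof of Theorem~\ref{thm BM for lambda} already gives $R_\gamma(u_t)\le\lambda_t:=(1-t)\lambda_0+t\lambda_1$; if equality holds in \eqref{BM for lambda restricted} then $R_\gamma(u_t)=\lambda_\gamma(\Omega_t)$, so the nonnegative function $u_t$ minimizes the Gaussian Rayleigh quotient on $\Omega_t$ and, by simplicity of $\lambda_\gamma(\Omega_t)$, is a positive multiple of the eigenfunction of $\Omega_t$; in particular $u_t\in C^\infty(\Omega_t)$ solves $Lu_t=-\lambda_\gamma(\Omega_t)u_t$, and, since $\Omega_t$ is a Minkowski combination of sets with $C^{2,\alpha}_+$ boundary and hence again has $C^{2,\alpha}_+$ boundary, $u_t\in C^{2,\alpha}(\overline{\Omega_t})$. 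Next I fix $\bar x\in\Omega_t$ with optimal pair $\bar x_0\in\Omega_0$, $\bar x_1\in\Omega_1$ as in \eqref{x0x1} and take the $C^2$ function $\psi$ built in that proof, which touches $u_t$ from below at $\bar x$. Since $\psi\le u_t$ near $\bar x$ with equality at $\bar x$ and both are $C^2$ there, $\nabla\psi(\bar x)=\nabla u_t(\bar x)$ and $D^2\psi(\bar x)\le D^2u_t(\bar x)$; on the other hand \eqref{deltapsi} and the equation for $u_t$ give $\Delta\psi(\bar x)=\langle\bar x,\nabla u_t(\bar x)\rangle-\lambda_t u_t(\bar x)=\Delta u_t(\bar x)$, so equal traces and $D^2\psi(\bar x)\le D^2u_t(\bar x)$ force $D^2\psi(\bar x)=D^2u_t(\bar x)$. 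Rewriting this equality through \eqref{nablatheta}, \eqref{D2psi} and the identity $D^2u/u=D^2(\ln u)+\nabla(\ln u)\otimes\nabla(\ln u)$, and setting $W_i=-\ln u_i$, $W_t=-\ln u_t$, I obtain, for every $\bar x\in\Omega_t$,
\begin{equation}\label{hessidentity}
\nabla W_t(\bar x)=\nabla W_0(\bar x_0)=\nabla W_1(\bar x_1)=:y,\qquad D^2W_t(\bar x)=(1-t)\,D^2W_0(\bar x_0)+t\,D^2W_1(\bar x_1).
\end{equation}

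Now I bring in the hypothesis $\Delta u_i<0$: by Theorem~\ref{thm log-concavity lambda} each $W_i$ is strongly convex, and by \eqref{hessidentity} so is $W_t$. Each of $W_0,W_1,W_t$ is thus a smooth strictly convex function on its bounded convex domain, blowing up to $+\infty$ at the boundary (where the corresponding eigenfunction vanishes), so its Legendre conjugate $\Phi:=W^*$ is a finite, smooth, strictly convex function on all of $\R^n$, with $\nabla\Phi=(\nabla W)^{-1}$ a diffeomorphism onto the domain of $W$ and $D^2\Phi(y)=\big[D^2W(\nabla\Phi(y))\big]^{-1}$. Dualizing the infimal-convolution formula $W_t(x)=\min\{(1-t)W_0(x_0)+tW_1(x_1):(1-t)x_0+tx_1=x\}$ gives $\Phi_t=(1-t)\Phi_0+t\Phi_1$ on $\R^n$, hence $D^2\Phi_t=(1-t)D^2\Phi_0+tD^2\Phi_1$; reading \eqref{hessidentity} through $\bar x_i=\nabla\Phi_i(y)$, $\bar x=\nabla\Phi_t(y)$ — noting that $y$ runs over all of $\R^n$ as $\bar x$ runs over $\Omega_t$ — yields in addition $[D^2\Phi_t(y)]^{-1}=(1-t)[D^2\Phi_0(y)]^{-1}+t[D^2\Phi_1(y)]^{-1}$ for every $y$. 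Since $X\mapsto X^{-1}$ is operator convex on positive definite matrices, the two relations force $D^2\Phi_0(y)=D^2\Phi_1(y)$ for all $y$, so $\Phi_0-\Phi_1$ is affine; taking conjugates, $W_0(\cdot)=W_1(\cdot-a)+c$ for some $a\in\R^n$, $c\in\R$, i.e. $u_0(\cdot)=e^{-c}u_1(\cdot-a)$ and $\Omega_0=\Omega_1+a$. Finally, inserting $u_0(\cdot)=e^{-c}u_1(\cdot-a)$ into the two eigenvalue equations gives $\langle a,\nabla u_1\rangle=(\lambda_0-\lambda_1)u_1$ in $\Omega_1$; if $a\neq0$ this makes $\ln u_1$ affine along every chord of the bounded set $\Omega_1$ parallel to $a$, contradicting that $u_1\to0$ at both endpoints of such a chord. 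Hence $a=0$, so $\Omega_0=\Omega_1$, which in particular means they coincide up to a translation, as asserted.

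The main obstacle I expect is this first step — making the equality-case bookkeeping fully rigorous: one must check that $|\nabla u_t|^2$ is $\gamma$-integrable and that the nonnegative Rayleigh minimizer is indeed the positive eigenfunction, invoke interior elliptic regularity for $u_t$, and record that the Minkowski combination $\Omega_t$ inherits the $C^{2,\alpha}_+$ regularity of $\Omega_0,\Omega_1$; only with these in place is the pointwise comparison $D^2\psi(\bar x)=D^2u_t(\bar x)$ — and hence \eqref{hessidentity} at every point — legitimate. Once \eqref{hessidentity} is established the rest is soft: the passage to Legendre transforms only uses the strong convexity furnished by Theorem~\ref{thm log-concavity lambda} (this is precisely where the assumptions $\Delta u_i<0$ and $C^{2,\alpha}_+$ enter), and the operator-convexity step is elementary — after a simultaneous diagonalization it reduces to the scalar fact that $b+b^{-1}\ge 2$ with equality iff $b=1$.
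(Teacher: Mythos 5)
Your proof is correct, and it takes a genuinely different (though closely related) route from the paper's; it also sharpens the conclusion. Both arguments begin from the observation that equality makes $u_t$ an eigenfunction of $\Omega_t$, both use the strong convexity of $W_i=-\ln u_i$ from Theorem~\ref{thm log-concavity lambda} to pass to Legendre conjugates $\Phi_i$ with $\Phi_t=(1-t)\Phi_0+t\Phi_1$, and both end by converting $D^2w_0(x_0)\equiv D^2w_1(x_1)$ into $\Omega_0=\Omega_1+a$. The difference is in the middle. The paper extracts only the \emph{harmonic-mean} Hessian relation $D^2w_t(x_t)^{-1}=(1-t)D^2w_0(x_0)^{-1}+tD^2w_1(x_1)^{-1}$ from Legendre duality and then argues through the convexity of $M\mapsto\trace(M^{-1})$: the PDE for $u_t$ forces equality in the resulting trace inequality, and the equality conditions of \cite[Appendix, Lemma~4]{Colesanti-Salani} give $D^2w_0=D^2w_1$. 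You instead derive a second, pointwise identity directly from the touching construction in the proof of Theorem~\ref{thm BM for lambda}: since $\psi$ touches $u_t$ from below, $D^2\psi(\bar x)\le D^2u_t(\bar x)$, while \eqref{deltapsi} and the eigenvalue equation (with eigenvalue exactly $\lambda_t$) give $\Delta\psi(\bar x)=\Delta u_t(\bar x)$; matching traces then force the full operator identity $D^2\psi(\bar x)=D^2u_t(\bar x)$, which unwinds to the \emph{arithmetic-mean} relation $D^2W_t(\bar x)=(1-t)D^2W_0(\bar x_0)+tD^2W_1(\bar x_1)$. On the Legendre side this reads $[D^2\Phi_t]^{-1}=(1-t)[D^2\Phi_0]^{-1}+t[D^2\Phi_1]^{-1}$, which together with $D^2\Phi_t=(1-t)D^2\Phi_0+tD^2\Phi_1$ and the operator convexity of $X\mapsto X^{-1}$ immediately yields $D^2\Phi_0\equiv D^2\Phi_1$. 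Your version thus trades the trace-of-inverse argument for a cleaner single operator identity. More substantively, you push one step further than the paper: substituting $u_0(\cdot)=e^{-c}u_1(\cdot-a)$ into the two eigenvalue equations gives $\langle a,\nabla\ln u_1\rangle\equiv\lambda_0-\lambda_1$ in $\Omega_1$, which is incompatible with $\ln u_1\to-\infty$ at both endpoints of any chord parallel to $a$, so $a=0$. This is not cosmetic: the paper's statement is an ``if and only if,'' yet $\lambda_\gamma$ is not translation-invariant, so ``$\Omega_0=\Omega_1+a$ with $a\ne0$'' does \emph{not} imply equality (indeed your argument shows $s\mapsto\lambda_\gamma(\Omega_1+sa)$ is strictly convex for $a\ne0$); your observation that $a$ must vanish is exactly what is needed to make the equivalence true, sharpening the conclusion to $\Omega_0=\Omega_1$.

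Two minor points worth flagging. First, the boundary regularity $u_t\in C^{2,\alpha}(\overline{\Omega_t})$ you mention is not actually needed — interior elliptic regularity gives $u_t\in C^\infty(\Omega_t)$, which suffices for the touching argument at interior points; what you do need (and correctly flag) is that $u_t\in W^{1,2}_0(\Omega_t,\gamma)$, which follows because $u_0,u_1\in C^{2,\alpha}(\overline{\Omega_i})$ are Lipschitz, hence so is the sup-convolution $u_t$ on the bounded set $\overline{\Omega_t}$, and $u_t=0$ on $\partial\Omega_t$. Second, your claim that after simultaneous diagonalization the equality case of operator convexity of inversion reduces to $b+b^{-1}\ge2$ is correct, but be careful: simultaneous diagonalization of $D^2\Phi_0$ and $D^2\Phi_1$ by an orthogonal matrix is not available in general (they need not commute); the reduction is instead via $C:=(D^2\Phi_0)^{-1/2}D^2\Phi_1(D^2\Phi_0)^{-1/2}$, which is still elementary.
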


\begin{proof} First of all, notice that, by \eqref{Ru_t}, if \eqref{BM for lambda restricted} holds, then the function $u_t$ defined in \eqref{ut} must be an eigenfunction of $\Omega_t$, that is, it must solve \eqref{EDP lambda} in $\Omega_t$.

For $i=0,1$, let $u_i$ be an eigenfunction of $\Omega_i$, that is a  solution of problem \eqref{EDP lambda} in $\Omega_i$. 
We set
$$
w_i=-\ln(u_i).
$$
Then $w_i$ verifies
\begin{equation}
\left\{
\begin{array}{lll}
\Delta w_i(x)=\lambda_i+|\nabla w_i(x)|^2+\langle \nabla w_i(x),x\rangle\quad\mbox{in $\Omega_i$,}\\
\\
\lim_{x\to\partial\Omega_i} w_i(x)=+\infty.
\end{array}
\right.
\end{equation}
Here, for simplicity, we denote by $\lambda_i$ the eigenvalue of $\Omega_i$, for $i=0,1,t$.By Theorem \ref{thm log-concavity lambda 3}, we know that $w_i$ is convex, then the boundary condition implies
\begin{equation}\label{gradient image}
\nabla w_i(\R^n)=\{\nabla w_i(x)\colon x\in\R^n\}=\R^n.
\end{equation}
By Theorem \ref{thm log-concavity lambda}, we further know that $w_i$ is strongly convex in $\Omega_i$, i.e. $D^2 w_i(x)>0$ for every $x\in\Omega_i$. Now we consider the standard Legendre conjugate of $w_i$:
$$
v_i(y)=w_i^*(y)=\sup_{x\in\Omega_i}\langle x,y\rangle-w_i(x)
$$
(note that the superscript $^*$  for functions has been already used with a different meaning in Section \ref{section viscosity}).

By \eqref{gradient image}, the domain of $v_i$ is $\R^n$. The strict positivity of $D^2 w_i$ on $\Omega_i$ implies that $v_i\in C^2(\R^n)$, moreover $\nabla w_i$ and $\nabla v_i$ are diffeomorpohisms between $\Omega_i$ and $\R^n$, inverse to each other, and finally:
\begin{equation}\label{hessian conjugate}
D^2 w_i(x)=\left(D^2 v_i(\nabla w_i(x))\right)^{-1}.
\end{equation}
We set
\begin{equation}\label{wt}
w_t=((1-t)v_0+tv_1)^*=((1-t)w_0^*+tw_1^*)^*.
\end{equation}
The function $w_t$ is defined and convex in $\Omega_t$ and, as it is well known, can be also defined as inf-convolution of $w_0$ and $w_1$, that is, 
\begin{equation}\label{sup convolution}
w_t(x_t)=\inf\{(1-t)w_0(x_0)+tw_1(x_1)\colon x_0\in\Omega_0,\, x_1\in\Omega_1,\, x_t=(1-t)x_0+t x_1\}
\end{equation}
for every $x_t\in\Omega_t:=(1-t)\Omega_0+t\Omega_1$.
Then we see that 
\begin{equation}\label{wtut}
w_t=-\log(u_t)\,,
\end{equation}
where $u_t$ is the function defined in \eqref{ut}.

By the boundary condition verified by $w_0$ and $w_1$, 
\begin{equation}\label{bc for wt}
\lim_{x\to\partial\Omega_t}w_t(x)=+\infty.
\end{equation}
Moreover, as $v_i\in C^2(\R^n)$ and $D^2 v_i>0$ in $\R^n$ for $i=0,1$, we have that $v_t:=(w_t)^*\in C^2(\R^n)$ and $D^2v_t>0$ in $\R^n$.

By \eqref{sup convolution} and \eqref{bc for wt}, for every $x_t\in\Omega_t$ there exist a (unique) $x_0\in\Omega_0$, $x_1\in\Omega_1$, such that
\begin{eqnarray*}
x_t=(1-t)x_0+tx_1,\quad w_t(x_t)=(1-t)w_0(x_0)+tw_1(x_1).
\end{eqnarray*}
Using the Lagrange multipliers theorem (see also \cite[pp. 126--127]{Colesanti}), it follows that
$$
\nabla w_t(x_t)=\nabla w_1(x_1)=\nabla w_0(x_0).
$$
Moreover, by \eqref{hessian conjugate} and \eqref{wt} (see also \cite[p. 127]{Colesanti}), we have
$$
D^2 w_t(x_t)=\left((1-t)(D^2 w_0(x_0))^{-1}+t(D^2 w_1(x_1))^{-1}\right)^{-1}.
$$

Next, we consider the function
$$
M\,\rightarrow\,\trace(M^{-1})
$$
as defined in the space of symmetric positive definite square matrices of order $n$. This function is convex (see, for instance, \cite[Appendix, Lemma 4]{Colesanti-Salani}, or \cite[Appendix]{Alvarez-Lasry-Lions}). Hence
\begin{equation}\label{Deltawt}
\Delta w_t(x_t)\geq(1-t)\Delta w_0(x_0)+t\Delta w_1(x_1)\,,
\end{equation}
that is
\begin{eqnarray*}
\Delta w_t(x_t)&\ge& (1-t)\lambda_0+t\lambda_1+(1-t)|\nabla w_0(x_0)|^2+t|\nabla w_1(x_1)|^2+\\
&&+(1-t)\langle\nabla w_0(x_0),x_0\rangle+t\langle\nabla w_1(x_1),x_1\rangle\,,
\end{eqnarray*}
which, by the previous considerations, implies
\begin{equation}\label{Deltawt2}
\Delta w_t(x_t)\ge(1-t)\lambda_0+t\lambda_1+|\nabla w_t(x_t)|^2+\langle\nabla w_t(x_t),x_t\rangle\,.
\end{equation}
On the other hand, as observed at the very beginning of the proof, the function 
$$
u_t=e^{-w_t},
$$
verifies
\begin{equation}\label{BVP for barut}
\Delta\bar u_t=-((1-t)\lambda_0+t\lambda_1)\bar u_t+\langle \nabla\bar u_t,x\rangle,\quad\mbox{in $\Omega_t$,}\\
\end{equation}
which implies the equality in \eqref{Deltawt2} and in turn in \eqref{Deltawt}. Then equality holds in the differential inequality in problem \eqref{BVP for barut}. Going backwards through the previous steps of the proof, we find out that, by equality conditions in \cite[Appendix, Lemma 4]{Colesanti-Salani},
$$
D^2 w_0(x_0)=D^2 w_1(x_1).
$$
As a further consequence we deduce
$$
D^2 v_0(y)=D^2 v_1(y)\quad\forall\, z\in\R^n
$$
and then
$$
\nabla v_0(y)=\nabla v_1(y)+x_0
$$
for some $x_0\in\R^n$. On the other hand:
$$
\Omega_i=\nabla v_i(\R^n), \quad i=0,1
$$
(as $\nabla v_i$ and $\nabla w_i$ are inverse to each other). We conclude that
$$
\Omega_0=\Omega_1+x_0.
$$  
\end{proof}

\begin{corollary}\label{cor1}
In Theorem \ref{thm BM for lambda restricted}  substitute assumption \eqref{Delta0} with the following:
\begin{equation}\label{maxu0u1}
\text{$u_0$ and $u_1$ have maximum at the origin}.
\end{equation}
Then the same conclusion holds.
\end{corollary}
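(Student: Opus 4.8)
The plan is to deduce Corollary~\ref{cor1} directly from Theorem~\ref{thm BM for lambda restricted}, by checking that hypothesis \eqref{maxu0u1} forces hypothesis \eqref{Delta0}. First I would observe that, since $u_i>0$ in $\Omega_i$ and $u_i=0$ on $\partial\Omega_i$, the equality $u_i(0)=\max_{\overline{\Omega}_i}u_i$ forces $u_i(0)>0$ and hence $0\in\Omega_i$, for $i=0,1$. Then, repeating the short argument in the proof of Corollary~\ref{max0}: by Theorem~\ref{thm log-concavity lambda 3} each $u_i$ is log-concave, hence its superlevel sets are convex; fixing $x\in\Omega_i$ and applying this to $\{u_i\ge u_i(x)\}$, which contains both $x$ and the maximum point $0$, we get $[0,x]\subset\{u_i\ge u_i(x)\}$, so $u_i(\tau x)\ge u_i(x)$ for $\tau\in[0,1]$; differentiating at $\tau=1$ yields $\langle x,\nabla u_i(x)\rangle\le0$ in $\Omega_i$. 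By the equation satisfied by $u_i$, namely $\Delta u_i=\langle x,\nabla u_i\rangle-\lambda_i u_i$, we conclude $\Delta u_i<0$ in $\Omega_i$, i.e.\ \eqref{Delta0} holds. Theorem~\ref{thm BM for lambda restricted} then applies: if equality holds in \eqref{BM for lambda}, then $\Omega_0=\Omega_1+x_0$ for some $x_0\in\R^n$, while the converse implication (from $\Omega_0=\Omega_1$, whence $\Omega_t=\Omega_0$ for all $t\in[0,1]$) is immediate.

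It remains to show that the translation vector $x_0$ is in fact $0$. For this I would revisit the final steps of the proof of Theorem~\ref{thm BM for lambda restricted}: there equality forces $D^2 v_0\equiv D^2 v_1$ on $\R^n$, hence $v_0-v_1$ is affine, $v_0(y)=v_1(y)+\langle x_0,y\rangle+b$ for some $b\in\R$; taking Legendre transforms back gives $w_0(x)=w_1(x-x_0)-b$, that is $u_0(x)=c\,u_1(x-x_0)$ with $c=e^{b}>0$, valid on $\Omega_0=\Omega_1+x_0$. Consequently $u_0$ attains its maximum precisely where $u_1(\,\cdot\,-x_0)$ does, i.e.\ at $x=x_0$; but by \eqref{maxu0u1} the maximum of $u_0$ is at the origin, so $x_0=0$ and $\Omega_0=\Omega_1$. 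In particular the conclusion of Theorem~\ref{thm BM for lambda restricted} holds, which is what the corollary asserts.

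I do not anticipate any serious difficulty here: verifying \eqref{Delta0} is exactly the computation already carried out for Corollary~\ref{max0}, and the rest is a matter of tracking the equality analysis of Theorem~\ref{thm BM for lambda restricted}. The only point that needs a little care is to extract from that analysis the pointwise relation $u_0(x)=c\,u_1(x-x_0)$ between the two eigenfunctions, rather than merely the identification $\Omega_0=\Omega_1+x_0$ of the domains, since this is what lets one use the common location of the maxima to conclude that $x_0=0$. If one is content with the statement literally as phrased---coincidence up to a translation---the second paragraph is not even needed.
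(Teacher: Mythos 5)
Your proof is correct and follows the same route as the paper: reduce \eqref{maxu0u1} to \eqref{Delta0} via the argument of Corollary~\ref{max0} (star-shapedness of superlevel sets about the maximum point gives $\langle x,\nabla u_i\rangle\le0$, hence $\Delta u_i<0$), then invoke Theorem~\ref{thm BM for lambda restricted}. Your second paragraph showing $x_0=0$ is a correct bonus, but as you yourself note it is not required by the statement, whose conclusion is only coincidence up to a translation; the paper reserves the step $x_0=0$ for the proof of Theorem~\ref{equality conditions}, where it follows more quickly from central symmetry.
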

\begin{proof}
As in Corollary \ref{max0}, we have just to observe that assumption \eqref{maxu0u1}, together with the convexity of the involved sets, yield assumption \eqref{Delta0}.
\end{proof}
\begin{proof}[Proof of Theorem \ref{equality conditions}]
Like in Corollary \ref{symmetric}, observe that the symmetry of $\Omega_0$ and $\Omega_1$ implies \eqref{maxu0u1}, then we can apply Corollary \ref{cor1} to obtain that there exist $x_0$ such that $\Omega_1=\Omega_0+x_0$.
As $\Omega_0$ and $\Omega_1$ are origin symmetric, then $x_0=0$.
\end{proof}

\end{document}